\newtheorem{theorem}{Theorem}[section]
\newtheorem{lemma}{Lemma}[section]
\newtheorem{corollary}{Corollary}[section]
\newtheorem{definition}{Definition}[section]
\newtheorem{proposition}{Proposition}[section]
\newtheorem{remark}{Remark}[section]
\begin{document}
\title
{Consistency of the Local Density Approximation
and Generalized Quantum Corrections
for Time Dependent Closed Quantum
Systems}
\author{Joseph W. Jerome\footnotemark[1]}
\date{}
\maketitle
\pagestyle{myheadings}
\markright{Quantum Corrections}
\medskip

\vfill
{\parindent 0cm}

{\bf 2010 AMS classification numbers:} 
35Q41; 81Q05.  

\bigskip
{\bf Key words:} 
Time dependent quantum systems; time-history; quantum corrections; 
local density approximation

\fnsymbol{footnote}
\footnotetext[1]{Department of Mathematics, Northwestern University,
                Evanston, IL 60208. \\In residence:
George Washington University,
Washington, D.C. 20052}

\begin{abstract}
Time dependent quantum systems are the subject of intense inquiry, in
mathematics, science, and engineering,  
particularly at the atomic and molecular levels.
In 1984, Runge and Gross introduced time dependent density functional
theory (TDDFT), a non-interacting electron model, which predicts
charge exactly. An exchange-correlation potential is included in the
Hamiltonian to enforce this property. 
We have previously investigated such systems
on bounded domains for Kohn-Sham potentials by use of evolution operators
and fixed point theorems.  
In this article, motivated by usage in the physics community,  we consider
local density approximations (LDA) for building 
the exchange-correlation potential,
as part of a set of quantum corrections. 
Existence and uniqueness of solutions are established separately 
within a framework for general quantum corrections,
including time-history corrections and ionic Coulomb potentials,
in addition to LDA potentials.
In summary, we are able to demonstrate a unique weak solution, 
on an  arbitrary time
interval, for a general class of quantum corrections, including 
those typically used in numerical simulations of the model.
\end{abstract}

\section{Introduction}
Time dependent density functional theory (TDDFT) was introduced
by E.~Runge and E.K.U.~Gross in \cite{RG} as a non-interacting electron
model which tracks electron charge exactly. An 
exposition of the subject may be found in \cite{U}.
When Kohn-Sham potentials are used, the electronic Hamiltonian includes any
(time dependent) external potentials, ionic potentials, the Hartree
potential, and the compensating exchange-correlation potential to ensure 
the non-interacting and charge exactness features of the model.
By permitting time dependent potentials,
TDDFT extends the nonlinear Schr\"{o}dinger equation, 
which has been studied extensively \cite{CH,Caz}, principally with
potentials not directly depending on time. Some progress for time
dependent linear Hamiltonians has been made \cite{MR}. 
In previous work \cite{JP,J1}, we analyzed  
closed quantum systems on bounded domains of
${\mathbb R}^{3}$ 
via time-ordered evolution operators. The article \cite{JP} demonstrated
strong $H^{2}$ solutions, compatible with simulation, whereas  
the article \cite{J1} 
demonstrated weak 
solutions; 
\cite{J1} also includes the exchange-correlation component of the 
Hamiltonian potential, not included in \cite{JP}, which 
is a nonlocal time-history term, satisfying certain  regularity
hypotheses.
TDDFT is a significant field for applications, 
including computational nano-electronics and chemical
physics \cite{tddft}. 

An important early article in the time dependent case, directed toward 
Hartree-Fock Hamiltonians, is 
\cite{CLB}. This article included nuclear dynamics as a coupled classical
dynamical system, and defined an electronic Hamiltonian in terms of a
kinetic term, together with a Hartree potential, an ionic potential with
mobile point masses, and an external, electric-field-induced potential.  
The mathematical framework was defined on ${\mathbb R}^{3}$ in terms of a
Cauchy problem with $H^{2}$ initial datum.
A recent
article directed toward TDDFT, 
in which a quantum correction is of
local density type, is \cite{SCB}; this article couples quantum mechanics 
and control theory. Neither of these articles allows for a time-history
exchange-correlation potential.

In this article, we introduce a class of quantum corrections, including 
the local density approximation, but also ionic Coulomb potentials and
time-history potentials.
As we demonstrate below, 
smoothing of such potentials provides a model within the
framework of \cite{J1}. By using compactness arguments suggested in
\cite{Caz}, we are able to obtain a solution of the originally posed
model. 
Uniqueness is also established.
The use of evolution
operators and smoothing as presented here is consistent with techniques in
the applied literature \cite{tddft} and provides direct support for
successive approximation and other numerical procedures \cite{JF,CP}. 
In this sense, the results of this article are more inclusive than an
existence/uniqueness analysis.

In the following subsections of the introduction, we 
summarize the basic results of \cite{J1}, 
as a starting point for the present article.
In section two, we formulate the new model, which incorporates the 
category of quantum corrections,
and we prove that its smoothed version lies
within the scope of \cite{J1}. In section three, we introduce the
compactness arguments, and establish existence of a weak solution as the
limit of solutions of the smoothed model. 
Uniqueness is established in section four. We conclude with some summary
remarks.
\subsection{The model}
\label{origmodel}
In its original form, without ionic influence,  
TDDFT includes three components for the electronic potential:  
an external potential, the Hartree potential, 
and a general non-local term representing the exchange-correlation potential, 
which is assumed to include a time-history part.
If $\hat H$ denotes
the Hamiltonian operator of the system, then the state $\Psi(t)$ of the
system obeys the nonlinear Schr\"{o}dinger equation,
\begin{equation}
\label{eeq}
i \hbar \frac{\partial \Psi(t)}{\partial t} = \hat H \Psi(t).
\end{equation}
Here, 
$\Psi = \{\psi_{1}, \dots, \psi_{N}\}$ 
consists of 
$N$ 
orbitals, and the charge density 
$\rho$ 
is defined by 
$$ \rho({\bf x}, t) = |\Psi({\bf x}, t)|^{2} = 
\sum_{k = 1}^{N} |\psi_{k} ({\bf x}, t)|^{2}.
$$ 
An initial condition,
\begin{equation}
\label{ic}
\Psi(0) = \Psi_{0}, 
\end{equation}
and boundary conditions are included. 
The particles are confined to a bounded Lipschitz region 
$\Omega \subset {\mathbb R}^{3}$ 
and homogeneous Dirichlet boundary conditions hold 
within a closed system. 
$\Psi$ 
denotes a finite vector function of space and time. 
The effective potential 
$V_{\rm e}$ 
is a  real scalar function of the form,
$$
V_{\rm e} ({\bf x},t, \rho) = V({\bf x}, t) + 
W \ast \rho + \Phi({\bf x}, t, \rho).
$$
Here, 
$W({\bf x}) = 1/|{\bf x}|$ 
and the convolution
$W \ast \rho$ 
denotes the Hartree potential. If $\rho$ is extended as zero outside
$\Omega$, then, for ${\bf x} \in \Omega$, 
$$
W \ast \rho \; ({\bf x})=\int_{{\mathbb R}^{3}} 
W({\bf x} -{\bf y}) \rho({\bf y})\;d {\bf y},
$$
which depends only upon values $W({\bf z})$,
$\|{\bf z}\|\leq 
\mbox{diam}(\Omega)$. We may redefine $W$ 
smoothly outside this set,
so as to obtain a function of compact support for which Young's inequality
applies. The exchange-correlation potential 
$\Phi$ 
represents a time-history of $\rho$:
$$
\Phi({\bf x}, t, \rho)= \Phi({\bf x}, 0, \rho) + 
\int_{0}^{t} \phi({\bf x}, s, \rho) \; ds.
$$  
The Hamiltonian operator is given by, 
\begin{equation}
\hat H  
= -\frac{\hbar^{2}}{2m} \nabla^{2} 
 +V({\bf x}, t) + 
W \ast \rho + \Phi({\bf x}, t, \rho),
\label{Hamiltonian1}
\end{equation}
and 
$m$ 
designates the effective mass and $\hbar$ the  
normalized Planck's constant.
If ionic influence is present, then (\ref{Hamiltonian1}) is adjusted,
typically by Coulomb potentials.
\subsection{Definition of weak solution and function
spaces}
The solution 
$\Psi$ 
is continuous from the time interval 
$J$, 
to be
defined shortly,  
into the finite energy Sobolev space
of complex-valued 
vector functions which vanish in a generalized sense on the boundary, 
denoted 
$H^{1}_{0}(\Omega)$: $\Psi \in C(J; H^{1}_{0})$. 
The time derivative is continuous from 
$J$ 
into the dual 
$H^{-1}$ 
of 
$H^{1}_{0}$: 
$\Psi \in C^{1}(J; H^{-1})$.  
The spatially dependent test functions 
$\zeta$ 
are
arbitrary in 
$H^{1}_{0}$. 
The duality bracket is denoted 
$\langle f, \zeta \rangle$. 
{\it Norms and inner products are discussed in Appendix \ref{appendixA}.}
We will make use of the equivalence of the standard $H^{1}_{0}$ norm and
the gradient seminorm, due to the Poincar\'{e} inequality, 
which holds for bounded domains $\Omega$ \cite{Leoni}.
\begin{definition}
\label{weaksolution}
For 
$J=[0,T]$,  
the vector-valued function 
$\Psi = \Psi({\bf x}, t)$ 
is a  
weak solution of (\ref{eeq}, \ref{ic}, \ref{Hamiltonian1}) if 
$\Psi \in C(J; H^{1}_{0}(\Omega)) \cap C^{1}(J;
H^{-1}(\Omega)),$ 
if 
$\Psi$ 
satisfies the initial condition 
(\ref{ic}) for 
$\Psi_{0} \in H^{1}_{0}(\Omega)$, 
and if 
$\forall \; 0 < t \leq T$: 
\vspace{.25in}
\begin{equation}
i \hbar\langle \frac{\partial\Psi(t)}{\partial t},
\zeta \rangle  = 
\int_{\Omega} \frac{{\hbar}^{2}}{2m}
\nabla \Psi({\bf x}, t)\cdotp \nabla { \zeta}({\bf x}) 
+ V_{\rm e}({\bf x},t,\rho) \Psi({\bf x},t) { \zeta}({\bf x})
d{\bf x}. 
\label{wsol}
\end{equation}
\end{definition}
\subsection{Hypotheses and theorem statement}
\label{hyps}
We provide some discussion, relevant to the physical model, prior to the
statement of the hypotheses. Additional discussion will be provided
following the hypotheses. It is emphasized that the hypotheses of this
subsection are those required for the original theory of \cite{J1} to apply;
this was accomplished with evolution operators and the Banach fixed point
mapping. Subsequent sections of this article consider more general families
of correction potentials. 

The time-history potential $\Phi({\bf x}, t, \rho)$ 
above has a structure, including the time-integrated part, which is
motivated by \cite[Eqs.\ (15), (17)]{MarGross}. This article characterizes
the action functionals $A$ whose variational derivatives with respect to 
$\rho$ yield appropriate exchange-correlation potentials.  
The form of $\Phi$ selected above represents a general statement of these
ideas. It is not unreasonable that the mathematical hypotheses, to be
stated shortly, should resemble 
the known properties of the Hartree potential because of
the restorative nature of exchange and correlation. From a mathematical
perspective, the model permits multiple `copies' of $\Phi$, allowing for
quantum corrections. These are seen to be important for applications. For
example, in the quantum chemistry community \cite{SaddTeter}, it is
appropriate to split $\Phi$: the exchange part is represented by a
weighted density approximation (WDA), while the correlation part is
represented by a local density approximation (LDA). 
The nonlocal WDA form for $\Phi$ is appropriate for nonuniform mixtures
\cite{DL}. The general form we have allowed for $\Phi$ is intended to
anticipate applications of this type.

The following
hypotheses are those for which the evolution operator theory of \cite{J1}
applies. The present article builds upon this established theory.  

We assume the following 
hypotheses in order to apply the results of
\cite{J1}.
\begin{itemize}
\item
\begin{enumerate}
\item
The time-history potential 
$\Phi$ 
 is continuous in 
$t \in J$  
into 
$H^{1}_{0}$.
\item
$\Phi$ is bounded, uniformly  in 
$t \in J$, 
from $H^{1}_{0}$
into 
$W^{1,3}$.
More precisely,
by boundedness, we mean that the family $\{\Phi(\cdotp, t, \cdotp)\}$  
maps every fixed ball in $H^{1}_{0}$ 
into a fixed ball in 
$W^{1,3}$,
uniformly in
$t$.
\end{enumerate}
\item
The derivative 
$\partial \Phi/\partial t = 
\phi$ 
is assumed measurable, and bounded in its arguments. 
\item
Furthermore, the following smoothing condition
is assumed, 
expressed by a (uniform) Lipschitz norm condition:
$$\forall t \in [0,T],
\mbox{\rm if} \; 
\|\Psi_{j}\|_{H^{1}_{0}},
j=1,2, \; \mbox{are bounded by} \;  r,
$$
then
\begin{equation}
\|[\Phi(\cdotp,t,|\Psi_{1}|^{2})-\Phi(\cdotp,t,|\Psi_{2}|^{2})]\psi\|_{H^{1}}
\leq 
C(r) \|\Psi_{1} - \Psi_{2}\|_{H^{1}_{0}} \|\psi\|_{H^{1}_{0}}. 
\label{ecfollowsH}
\end{equation}
Here, 
$\psi$ 
is arbitrary in 
$H^{1}_{0}$ 
and 
$C(r)$ 
depends only on 
$r$.
\item
If $\Phi(\cdotp, 0, \rho)$ fails to be a nonnegative functional of
$\rho = |\Psi|^{2}$, 
we assume that it
satisfies, uniformly in $t$,  
for $\; \|\Psi(t)\|_{L^{2}} =
\|\Psi_{0}\|_{L^{2}}$, 
the constraint that 
\begin{equation}
\label{constraint}
\|\Phi(\cdotp, 0,
|\Psi|^{2})
|\Psi|^{2}\|_{L^{1}}   \leq C_{1}  \|\nabla \Psi\|_{L^{2}}^{2} 
+ C_{2}, \; \Psi(t) \in H^{1}_{0}, 
\end{equation}
for nonnegative constants $C_{1}$ and $C_{2}$. It is required that 
$C_{2}$ depend only on $\|\Psi_{0}\|_{L^{2}}$ and the problem data,  
and $C_{1}$ is
sufficiently small:
\begin{equation}
\label{sufsmall}
C_{1} < \frac{\hbar^{2}}{2m}. 
\end{equation} 
\item
The so-called external potential 
$V$ 
is assumed to be 
continuously
differentiable on the closure of the space-time domain. 
\end{itemize}
\begin{remark}
We comment here on the hypotheses. 
\begin{enumerate}
\item
The regularity assumed for $\Phi$ in the first assumption 
is consistent with certain
requirements of TDDFT. One of these is the Zero Force Theorem
\cite{U}, which imposes a gradient condition on $\Phi$. 
We note that the Hartree potential satisfies these conditions. In fact,
any convolution of the form $\Phi = F \ast \rho$, where $F \in W^{1,1}$,
satisfies the conditions. 
\item
An inequality of the form 
(\ref{ecfollowsH}) 
is satisfied by the Hartree
potential \cite[Theorem 3.1]{JNA}, 
and by any convolution of the form $\Phi = F \ast \rho$, with
$F \in L^{2}$ and $\nabla F \in L^{1}$.
It was used in \cite{J1} to
construct the contraction mapping used there for the evolution operator. 
For quantum corrections not
satisfying this condition, the smoothing is utilized in the following
section in order to place the smoothed systems within this framework.
\item
Hypotheses (\ref{constraint}, \ref{sufsmall}) are relevant only when the
associated potentials are negative. This is expected to occur for
restoring potentials and certain Coulomb potentials. 
In the following section, it will be necessary to
smooth certain components of the quantum correction potential. The
smoothed Coulomb potentials satisfy 
(\ref{constraint}, \ref{sufsmall}) without qualification. However, for 
smoothed LDA approximations,  
there is a disparity in exponent bounds for $\alpha$. 
A smaller range is necessary for negative potentials (see (\ref{proplamb}) to
follow for verification in this case).
Also, unsmoothed convolutions of the form $\Phi = F \ast \rho$, with
$\nabla F \in L^{1}$, satisfy the conditions if they have sufficiently
small $L^{\infty}$ bounds.
\end{enumerate}
\end{remark}
The following theorem was proved in \cite{J1}, based upon the evolution
operator as presented in \cite{J2}, and will provide a solution
for the smoothed problem on 
$J$ 
as introduced in the following section.
\begin{theorem}
\label{EU}
For any interval 
$[0,T]$, 
the system (\ref{wsol}) in Definition
\ref{weaksolution}, 
with Hamiltonian
defined by (\ref{Hamiltonian1}),  
has a unique weak solution if the hypotheses of section \ref{hyps} hold. 
\end{theorem}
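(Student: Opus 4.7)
The plan is to reduce the nonlinear problem to a fixed point equation: for a candidate $\Psi^{*}$ in a suitable ball of $C(J; H^{1}_{0})$, freeze the density $\rho^{*}=|\Psi^{*}|^{2}$ in the effective potential and let $\mathcal{T}\Psi^{*}=\Psi$ be the unique weak solution of the resulting \emph{linear} time-dependent Schr\"odinger equation. A fixed point of $\mathcal{T}$ is then the sought weak solution of (\ref{wsol}). The argument has three ingredients: (i) solvability of the frozen-coefficient linear problem via a time-ordered evolution operator, (ii) an a priori $H^{1}_{0}$ energy bound, and (iii) a contraction estimate in $C(J_0; H^{1}_{0})$ for some $J_0 \subset J$.

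For step (i), I would verify the hypotheses of the evolution operator theory of \cite{J2} for the frozen Hamiltonian $\hat{H}^{*}(t)=-(\hbar^{2}/2m)\nabla^{2}+V+W\ast\rho^{*}+\Phi(\cdot,t,\rho^{*})$. The $C^{1}$ regularity of $V$ on the space-time cylinder, the $W^{1,1}$-Young estimate for the Hartree piece $W\ast\rho^{*}$, and the second hypothesis on $\Phi$ (continuity in $t$ into $H^{1}_{0}$ together with uniform boundedness into $W^{1,3}$) combine to make $V_{\rm e}(\cdot,t,\rho^{*})$ a multiplier sending $H^{1}_{0}$ continuously into $H^{1}_{0}$ and with the required time regularity; together with the form-sectoriality of $-\nabla^{2}$ with Dirichlet condition on the Lipschitz $\Omega$, this places the linear problem inside the Kato-type framework of \cite{J2} and produces $\Psi \in C(J; H^{1}_{0}) \cap C^{1}(J; H^{-1})$.

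For step (ii), testing the equation against $\overline{\Psi}$ and taking imaginary parts gives $L^{2}$-conservation, $\|\Psi(t)\|_{L^{2}}=\|\Psi_{0}\|_{L^{2}}$. Differentiating the energy $E(t) = (\hbar^{2}/2m)\|\nabla\Psi\|_{L^{2}}^{2} + \int V_{\rm e}|\Psi|^{2}$ and using the measurability/boundedness of $\phi = \partial_{t}\Phi$ together with smoothness of $V$ controls the time derivative of $E$. The constraint (\ref{constraint}) with the smallness (\ref{sufsmall}) absorbs the possibly negative contribution from $\Phi(\cdot,0,\rho)$ into the kinetic term, leaving a Gronwall-type bound on $\|\nabla \Psi(t)\|_{L^{2}}$ uniform on $[0,T]$; this is the key point that makes the stability argument work globally in $t$.

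For step (iii), given two candidates $\Psi_{1}^{*},\Psi_{2}^{*}$ in a fixed ball of radius $r$, the difference $\mathcal{T}\Psi_{1}^{*}-\mathcal{T}\Psi_{2}^{*}$ satisfies an inhomogeneous linear equation whose right-hand side involves $[V_{\rm e}(\cdot,t,\rho_{1}^{*})-V_{\rm e}(\cdot,t,\rho_{2}^{*})]\mathcal{T}\Psi_{j}^{*}$. The Hartree part is controlled by the classical Lipschitz estimate for $W\ast(\cdot)$, while the exchange-correlation part is exactly what is supplied by (\ref{ecfollowsH}). Combined with the evolution operator estimates this yields $\|\mathcal{T}\Psi_{1}^{*}-\mathcal{T}\Psi_{2}^{*}\|_{C([0,T_{0}];H^{1}_{0})} \leq \kappa(r,T_{0})\|\Psi_{1}^{*}-\Psi_{2}^{*}\|_{C([0,T_{0}];H^{1}_{0})}$ with $\kappa(r,T_{0})\to 0$ as $T_{0}\to 0$. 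The Banach contraction principle then produces a unique local weak solution, and the uniform bound from step (ii) lets one concatenate local solutions to cover all of $[0,T]$. Uniqueness on the entire interval follows by applying the same Lipschitz estimate to the difference of any two weak solutions and invoking Gronwall. The principal obstacle is step (i): checking the domain-invariance and strong continuity conditions of \cite{J2} for the frozen Hamiltonian on the Lipschitz domain $\Omega$; once this is in place, steps (ii) and (iii) are essentially energy-and-contraction arithmetic.
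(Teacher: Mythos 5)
Your outline — freeze the density, solve the resulting linear time-dependent problem via a time-ordered evolution operator, derive an a priori $H^{1}_{0}$ bound from the energy identity, and close with a Banach contraction using (\ref{ecfollowsH}) together with a continuation argument — is exactly the strategy the paper attributes to \cite{J1}, which it cites (rather than reproves) for Theorem \ref{EU}, describing the method as ``evolution operators and the Banach fixed point mapping'' with the energy identity later restated as Lemma \ref{lemma3.1}. One small terminological slip: the Dirichlet Laplacian enters here as the generator of a unitary group in the Kato hyperbolic framework of \cite{J2}, not through form-sectoriality, but this does not affect the substance of the argument, which otherwise matches the paper's approach.
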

\section{Quantum Corrections and the Local Density Approximation}
\label{qcsection}
In this section, we define a class of quantum correction potentials,   
including the local density approximation to the
exchange-correlation potential 
$\Phi$. These correction potentials are of three types.
\begin{enumerate}
\item
The local density approximation, discussed in Definition \ref{Def2.1}
to follow.
This potential is designated as $\Phi_{\mbox{\rm lda}}(\rho)$.
\item
A finite number of Coulomb ionic potentials, $c_{j} W(\cdotp - {\bf
x}_{j})$, subject to the Born-Oppenheimer approximation. In particular, 
the ionic masses are assumed to be point masses, at fixed locations ${\bf
x}_{j} \in \Omega$. The function $W$ is introduced in section
\ref{origmodel}. The constants $c_{j}$ may be positive or negative.
The aggregate of these Coulomb potentials is designated $\Phi_{\mbox{\rm
c}}(\cdotp)$.
\item
A time-history potential of the structure of $\Phi$, introduced in section
\ref{origmodel}. The presence of this potential allows for physical
modeling flexibility, since the exchange potential and the correlation
potential are viewed separately in TDDFT. We permit one of these to be
approximated locally and the other by a time-history among the modeling
choices. We retain the notation $\Phi(\cdotp, t, \rho)$ for this
component, assumed to satisfy the hypotheses detailed in section  
\ref{hyps}. Also, it is assumed that $\Phi(\cdotp, t,
\rho_{n}(\cdotp, t))$ converges in $L^{2}$, uniformly in $t$, if 
$\rho_{n}(\cdotp, t)$ converges in $L^{2}$, uniformly in $t$.
\end{enumerate}
The consolidated quantum correction potential is then given by
\begin{equation}
\Phi_{\mbox{\rm qc}}(\cdotp, t, \rho) = \Phi_{\mbox{\rm lda}}(
\rho) + \Phi_{\mbox{\rm c}}(\cdotp) + 
\Phi(\cdotp, t, \rho).
\end{equation}
\begin{definition}
\label{Def2.1}
The local density approximation $\Phi_{\mbox{\rm lda}}$ is 
now defined.
We consider the following approximation,
where 
$\lambda$ 
is a real constant, positive or negative.
\begin{equation}
\label{lda}
\Phi_{\rm lda}(\rho) = \lambda \rho^{\alpha/2} = 
\lambda |\Psi|^{\alpha}.
\end{equation} 
Additionally,
\begin{itemize}
\item
If 
$\lambda > 0$, 
the range of 
$\alpha$ 
is 
$1 \leq \alpha < 4$.
\item
If 
$\lambda < 0$, 
the range of 
$\alpha$ 
is 
$1 \leq \alpha \leq 4/3$.
Also, $|\lambda|$ must be sufficiently small, consistent with
(\ref{constraint}) and (\ref{sufsmall}).
\end{itemize}
\end{definition}
We redefine the Hamiltonian 
considered here as
\begin{equation*}
\hat H  
 =  -\frac{\hbar^{2}}{2m} \nabla^{2} 
 +V({\bf x}, t) + 
W \ast \rho + \Phi_{\rm qc}(\cdotp, t, \rho), 
\end{equation*}
\begin{equation}
\Phi_{\rm qc}(\cdotp, t, \rho)  =  
\underbrace{\lambda |\Psi|^{\alpha}(\cdotp, t)}_{\Phi_{\rm lda}} 
+ \underbrace{\sum_{j=1}^{M} c_{j}\frac{1}{|\cdotp - {\bf
x}_{j}|}}_{\Phi_{\rm c}} 
+ \Phi(\cdotp, t,
\rho). 
\label{Hamiltonian2}
\end{equation}
The proofs accommodate a finite number of terms in
$\Phi_{\rm lda}$. One term has been chosen for simplicity.
The parameters of $\Phi_{\rm lda}$ satisfy the assumptions of Definition
\ref{Def2.1}. The numerical constants $c_{j}$ are of arbitrary sign,
and the ionic locations ${\bf x}_{j}$ are fixed interior points in
$\Omega$.
$\Phi$ satisfies the hypotheses 
specified in (3) above, 
and is a nonlocal potential such as weighted density approximation.
Convolutions, discussed in Remark 1, represent an important class. 
{\color{blue}{
For simplicity, we assume that the leading part, $\Phi(\cdotp, 0, \rho)$,
is conserved, up to a positive constant multiple. This holds for
convolutions and other important examples.}}  
The time integrated part of $\Phi$ is motivated by \cite{MarGross}.
The following theorem is the goal of our analysis. 
\begin{theorem}
\label{central}
If the effective potential is redefined by 
\begin{equation}
\label{redefined}
V_{\rm e}({\bf x}, t, \rho) =
 V({\bf x}, t) + 
W \ast \rho + \Phi_{\rm qc}(\cdotp, t, \rho),
\end{equation}
then there is a weak solution of (\ref{wsol})
in the regularity class 
$C(J; H^{1}_{0}) \cap C^{1}(J;H^{-1})$ 
which satisfies the specified initial condition.
{\color{blue}{
Uniqueness holds except possibly for $1 < \alpha < 2$.}}
\end{theorem}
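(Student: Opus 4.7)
The plan is to regularize the singular components of $\Phi_{\mbox{\rm qc}}$, apply Theorem \ref{EU} to the resulting family of smoothed problems, and pass to the limit via a compactness argument in the spirit of \cite{Caz}. Concretely, for $\epsilon > 0$ I would replace each Coulomb kernel $1/|{\bf x}-{\bf x}_j|$ by a bounded, smooth, compactly supported truncation $W_\epsilon$, and regularize the LDA nonlinearity $|\Psi|^\alpha$ by a combination of a small lower cutoff on $|\Psi|^2$ and a mollification, so that the resulting composite map $\Psi \mapsto \Phi^\epsilon_{\mbox{\rm lda}}(|\Psi|^2)$ carries bounded sets of $H^1_0$ into bounded sets of $W^{1,3}$ and is Lipschitz in the sense of (\ref{ecfollowsH}). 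I would then verify the hypotheses of section \ref{hyps} for $\Phi^\epsilon_{\mbox{\rm qc}} := \Phi^\epsilon_{\mbox{\rm lda}} + \Phi^\epsilon_{\mbox{\rm c}} + \Phi$: continuity in $t$ into $H^1_0$ and the $W^{1,3}$ bound come from the mollifier structure, while, in the $\lambda < 0$ regime, the negativity bound (\ref{constraint})--(\ref{sufsmall}) follows from Gagliardo--Nirenberg together with the smallness assumption on $|\lambda|$ in Definition \ref{Def2.1}. Theorem \ref{EU} then supplies a weak solution $\Psi_\epsilon \in C(J;H^1_0)\cap C^1(J;H^{-1})$ of the smoothed problem.

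Next I would derive $\epsilon$-independent a priori bounds. Testing the smoothed equation against $\overline{\Psi_\epsilon}$ yields the mass conservation $\|\Psi_\epsilon(t)\|_{L^2} = \|\Psi_0\|_{L^2}$. Formal differentiation of the energy (justified by the $C^1(J;H^{-1})$ regularity) controls $\|\nabla\Psi_\epsilon(t)\|_{L^2}$; the key point when $\lambda<0$ is to absorb the LDA potential energy $\lambda\int|\Psi_\epsilon|^{\alpha+2}$ into the kinetic term via Gagliardo--Nirenberg, which requires $(\alpha+2)\theta\le 2$ with $\theta=3\alpha/(2(\alpha+2))$, i.e.\ $\alpha\le 4/3$, plus the smallness of $|\lambda|$ from (\ref{sufsmall}). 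The smoothed Coulomb and time-history contributions are similarly dominated by the $H^1$ norm. Reinsertion into the weak formulation then produces a uniform $L^\infty(J;H^{-1})$ bound on $\partial_t\Psi_\epsilon$.

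With $\Psi_\epsilon$ uniformly bounded in $L^\infty(J;H^1_0)$ and $\partial_t\Psi_\epsilon$ in $L^\infty(J;H^{-1})$, the Aubin--Lions lemma, together with the compact embedding $H^1_0 \hookrightarrow L^2$, yields a subsequence converging strongly in $C(J;L^2(\Omega))$ and weakly-$*$ in $L^\infty(J;H^1_0)$ to some $\Psi$. Strong $L^2$ convergence passes to a.e.\ convergence along a further subsequence, giving $|\Psi_\epsilon|^\alpha \to |\Psi|^\alpha$ a.e.; the uniform embedding $H^1_0 \hookrightarrow L^6$ supplies equi-integrability of $|\Psi_\epsilon|^{\alpha+1}$ in $L^{6/(\alpha+1)}$, and Vitali's theorem then produces strong convergence of $\lambda|\Psi_\epsilon|^\alpha\Psi_\epsilon$ in a space dual to $H^1_0$ (the endpoint $\alpha < 4$ is precisely what makes this duality pairing available). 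The smoothed Coulomb contribution $c_j W_\epsilon(\cdotp-{\bf x}_j)\Psi_\epsilon$ converges to $c_j|\cdotp-{\bf x}_j|^{-1}\Psi$ in $H^{-1}$ by Hardy's inequality combined with strong $L^2$ convergence of $\Psi_\epsilon$, and the assumed $L^2$-continuity of $\Phi(\cdotp,t,\rho)$ in $\rho$ (hypothesis (3) of section \ref{qcsection}) handles the time-history component. Passing to the limit in (\ref{wsol}) and recovering the full regularity class via the standard Schr\"odinger argument yields the existence half of the theorem.

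Uniqueness would be handled by forming the difference $w = \Psi^{(1)} - \Psi^{(2)}$ of two weak solutions sharing the initial datum, testing against $\bar w$, and bounding each potential difference by $\|w\|_{L^2}$ multiplied by an $H^1$-uniform factor: the Hartree and time-history pieces invoke (\ref{ecfollowsH}) as in \cite{J1}; the Coulomb piece drops out since it is $\rho$-independent; and the LDA piece follows from local Lipschitz continuity of $s \mapsto |s|^\alpha$ on bounded subsets of $H^1_0$ via Sobolev embedding and $\alpha\ge 1$. A Gronwall argument on $\|w(t)\|_{L^2}^2$ then forces $w\equiv 0$. I expect the main obstacle to be the limit passage for the LDA term at the upper endpoints of the admissible $\alpha$ range, where the embedding $H^1_0 \hookrightarrow L^6$ is saturated and the Gagliardo--Nirenberg exponent is sharp; it is precisely the constraints on $\alpha$ and on $|\lambda|$ in Definition \ref{Def2.1} that deliver both the a priori $H^1$ bound (in the $\lambda<0$ case) and the integrability margin needed to pass the nonlinear term to the limit.
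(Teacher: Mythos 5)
Your existence argument follows essentially the same route as the paper: regularize the singular quantum corrections, apply the existing Theorem~\ref{EU} to the smoothed family, establish $\epsilon$-uniform bounds in $L^{\infty}(J;H^{1}_{0})$ and $W^{1,\infty}(J;H^{-1})$, extract a convergent subsequence, and pass to the limit in the weak form. The technical apparatus differs in detail but not in substance. The paper mollifies the Coulomb and LDA potentials ($\Phi_{\rm c}\mapsto\phi_{\epsilon}\ast\Phi_{\rm c}$, $\Phi_{\rm lda}\mapsto\phi_{\epsilon}\ast\Phi_{\rm lda}$), which makes verification of the §\ref{hyps} hypotheses clean via Young's inequality, while you propose a Coulomb truncation and a cutoff-plus-mollification of the LDA; both are viable but yours would need more bookkeeping. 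For compactness the paper uses \cite[Prop.\ 1.3.14]{Caz} (weak $H^{1}_{0}$ convergence for every $t$) together with Simon's generalized Arzel\`{a}--Ascoli to obtain strong convergence in $C(J;L^{r})$ for every $r<6$; you invoke Aubin--Lions with a Vitali argument, which arrives at comparable information by a different route. One thing you gloss over: the recovery of $\Psi\in C(J;H^{1}_{0})$. The paper does this carefully (Theorem~\ref{central2}) by showing uniform-in-$t$ convergence of the kinetic energies via the energy identity, which is what makes Proposition~\ref{B1}(2) applicable; ``the standard Schr\"odinger argument'' is doing real work there and deserves to be spelled out.

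The uniqueness half, however, has a genuine gap, and it is exactly the point the paper's §4 is designed to circumvent. You propose testing the difference equation against $\bar{w}$ and running Gronwall on $\|w(t)\|_{L^{2}}^{2}$, claiming the LDA difference is ``bounded by $\|w\|_{L^{2}}$ times an $H^{1}$-uniform factor.'' But the natural pointwise bound $\bigl||\Psi_{1}|^{\alpha}\Psi_{1}-|\Psi_{2}|^{\alpha}\Psi_{2}\bigr|\le C(|\Psi_{1}|^{\alpha}+|\Psi_{2}|^{\alpha})|w|$ forces you to estimate $\int(|\Psi_{1}|^{\alpha}+|\Psi_{2}|^{\alpha})|w|^{2}$, and since in three dimensions $H^{1}_{0}\hookrightarrow L^{6}$ but not $L^{\infty}$, H\"older only yields a bound of the form $\|\Psi_{i}\|_{L^{6}}^{\alpha}\,\|w\|_{L^{12/(6-\alpha)}}^{2}$ with $12/(6-\alpha)>2$ whenever $\alpha>0$. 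Interpolating $\|w\|_{L^{12/(6-\alpha)}}$ between $L^{2}$ and $L^{6}$ (the latter controlled by the a priori $H^{1}$ bound) produces a differential inequality $\tfrac{d}{dt}\|w\|_{L^{2}}^{2}\le CM\,\|w\|_{L^{2}}^{2\theta}$ with $\theta=(4-\alpha)/4<1$, which is sublinear and does \emph{not} imply $w\equiv 0$ from zero initial data. The paper avoids this by invoking an additional assumption (the Green's operator, Assumption~\ref{Green}) and testing against $\psi={\mathcal G}\zeta$ so that the left side of the duality estimate becomes a multiple of $\|\Psi(\cdot,t)\|_{H^{1}_{0}}$ rather than $\|w\|_{L^{2}}$; the right side can then be dominated by $\int_{0}^{t}\|\Psi(\cdot,s)\|_{H^{1}_{0}}\,ds$ using Sobolev embeddings, and Gronwall closes at the $H^{1}_{0}$ level. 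Without Strichartz estimates on a bounded domain, some device of this kind is needed, and it is the essential ingredient your proposal is missing.

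Separately, note that the paper's uniqueness theorem is proved under an extra regularity assumption on $\Omega$ (sufficient, e.g., if $\partial\Omega$ is $C^{4}$), which you do not mention; your Lipschitz-domain setting would need at least that much to make the duality argument work.
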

The existence part of the proof of Theorem \ref{central} 
is carried out in section three (see
Theorems \ref{central1} and \ref{central2}). 
The uniqueness is demonstrated in section four.
\subsection{The smoothing}
\label{smoothing}
We begin by defining a standard convolution \cite{LiebLoss}. 
\begin{definition}
\label{convolution}
Suppose that
a nonnegative function 
$\phi_{1}$ 
is given, 
$\phi_{1} \in C^{\infty}_{0}({\mathbb R}^{3})$, 
of integral one. Set
$$
\phi_{\epsilon}({\bf x}) = 
\epsilon^{-3}\phi_{1}({\bf x}/\epsilon), \; {\bf x} \in {\mathbb
R}^{3},
$$ 
and, for 
$f \in L^{p}(\Omega), 1 \leq p < \infty$, 
$$
f_{\epsilon} = \phi_{\epsilon} \ast f.
$$
\end{definition}
We recall \cite{LiebLoss} that 
$\lim_{\epsilon \rightarrow 0}f_{\epsilon} = f$ 
in 
$L^{p}$ 
and 
$\|f_{\epsilon}\|_{L_{p}} \leq  \|f\|_{L_{p}}, \; \forall \epsilon > 0$.
\begin{definition}
\label{smoothpotdef}
We denote by $\Phi_{\epsilon}$ a smoothed replacement of $\Phi_{\mbox{\rm
qc}}$ as follows.
\begin{enumerate}
\item
$\Phi_{\mbox{\rm lda}} \mapsto \phi_{\epsilon} \ast
\Phi_{\mbox{\rm lda}}$. 
\item
$\Phi_{\mbox{\rm c}} \mapsto \phi_{\epsilon} \ast
\Phi_{\mbox{\rm c}}$. 
\item
Time-history terms are not smoothed.
\end{enumerate}
The effective potential for the approximate problem is given by:
\begin{equation}
\label{smootheff}
V_{\rm e}({\bf x}, t, \rho_{\epsilon}) =
 V({\bf x}, t) + 
W \ast \rho_{\epsilon} + \Phi_{\epsilon}({\bf x}, t, \rho_{\epsilon}).
\end{equation}
\end{definition}
\subsection{Existence and uniqueness for the smoothed system}
As mentioned in the introduction, we will show that the smoothed problem
has a unique weak solution on $[0, T]$ for each fixed $\epsilon > 0$.  
We first state the result.
\begin{proposition}
\label{2.1}
If 
$\Phi_{\rm qc}$ 
is replaced by its smoothing 
$\Phi_{\epsilon}$, as specified in Definition \ref{smoothpotdef},  
then the hypotheses of section \ref{hyps} hold, as applied to
$\Phi_{\epsilon}$.
In particular, Theorem \ref{EU} is applicable.
With $V_{\rm e}$ defined by (\ref{smootheff}), 
there exists a unique weak solution 
$\Psi_{\epsilon}$,
as specified in Definition 1.1, 
of the corresponding system:
\begin{equation}
i \hbar\langle \frac{\partial\Psi_{\epsilon}(t)}{\partial t},
\zeta \rangle  = 
\int_{\Omega} \frac{{\hbar}^{2}}{2m}
\nabla \Psi_{\epsilon}({\bf x}, t)\cdotp \nabla { \zeta}({\bf x}) 
+ V_{\rm e}({\bf x},t,\rho_{\epsilon}) 
\Psi_{\epsilon}({\bf x},t) { \zeta}({\bf x})\; d{\bf x}.  
\end{equation}
\end{proposition}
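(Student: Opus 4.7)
The plan is to verify, for each fixed $\epsilon>0$, that the three components of $\Phi_{\epsilon}$ individually satisfy the hypotheses of section \ref{hyps}, so that Theorem \ref{EU} applies to the smoothed system and yields a unique weak solution $\Psi_\epsilon$. The time-history component $\Phi(\cdotp,t,\rho)$ is, by the standing assumption (3) of section \ref{qcsection}, already compliant with those hypotheses, so the work reduces to the mollified LDA term $\phi_\epsilon\ast\Phi_{\rm lda}$ and the mollified Coulomb term $\phi_\epsilon\ast\Phi_{\rm c}$. The entire strategy rests on the standard trade-off that convolution with $\phi_\epsilon\in C^\infty_0$ buys arbitrary spatial regularity at the cost of $\epsilon$-dependent constants; since Proposition \ref{2.1} treats $\epsilon$ as fixed, this is harmless here (the $\epsilon$-uniformity issues appear only later, in the compactness step).

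For the boundedness from $H^1_0$ into $W^{1,3}$ and the continuity in $t$ (hypotheses (1)--(2)): the Coulomb potential $\Phi_{\rm c}$ lies in $L^p_{\rm loc}$ for $p<3$, so Young's inequality gives $\|\phi_\epsilon\ast\Phi_{\rm c}\|_{W^{1,\infty}(\Omega)}\leq C(\epsilon)$, independent of $t$ and $\Psi$. For the LDA part, $\Psi\in H^1_0\hookrightarrow L^6$ (bounded domain, $\dim=3$) gives $|\Psi|^\alpha\in L^{6/\alpha}\subset L^1$ for the relevant range of $\alpha$, and then
\[
\|\phi_\epsilon\ast(\lambda|\Psi|^\alpha)\|_{W^{1,\infty}}\leq |\lambda|\bigl(\|\phi_\epsilon\|_\infty+\|\nabla\phi_\epsilon\|_\infty\bigr)\||\Psi|^\alpha\|_{L^1}\leq C(\epsilon,r),
\]
for $\|\Psi\|_{H^1_0}\leq r$. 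On the bounded domain $\Omega$ this embeds into $W^{1,3}$. Continuity in $t$ into $H^1$ is then inherited from the continuity of $\Psi(t)$ in $H^1_0$ (hence of $|\Psi(t)|^\alpha$ in $L^{6/\alpha}$) through the smoothing kernel. The explicit $t$-derivative of the mollified LDA and Coulomb pieces is identically zero (they carry no explicit $t$-dependence), so the measurability/boundedness requirement on $\partial\Phi/\partial t$ is trivial for them and handled for the time-history piece by assumption.

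For the Lipschitz smoothing condition (\ref{ecfollowsH}), the Coulomb piece contributes nothing since it is $\rho$-independent. For the LDA piece, the elementary pointwise inequality
\[
\bigl||\Psi_1|^\alpha-|\Psi_2|^\alpha\bigr|\leq C_\alpha\bigl(|\Psi_1|+|\Psi_2|\bigr)^{\alpha-1}|\Psi_1-\Psi_2|,\qquad \alpha\geq 1,
\]
combined with H\"older in $L^{6}$ (Sobolev embedding), gives
\[
\bigl\||\Psi_1|^\alpha-|\Psi_2|^\alpha\bigr\|_{L^1}\leq C(r)\,\|\Psi_1-\Psi_2\|_{H^1_0}.
\]
Convolving with $\phi_\epsilon$ and its gradient upgrades this to a $W^{1,\infty}$ bound, and the standard estimate $\|f\psi\|_{H^1}\leq C\|f\|_{W^{1,\infty}}\|\psi\|_{H^1_0}$ yields the Lipschitz inequality in the required form with constant $C(\epsilon,r)$.

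Finally, for the constraint (\ref{constraint})--(\ref{sufsmall}) needed when the potential is negative (signed Coulomb charges, or $\lambda<0$): both smoothed potentials are bounded in $L^\infty$ by quantities controlled by $\|\Psi\|_{L^2}=\|\Psi_0\|_{L^2}$. For Coulomb, the $L^\infty$ bound is $C(\epsilon)$ outright. For the LDA with $\lambda<0$, the Definition \ref{Def2.1} restriction $\alpha\leq 4/3<2$ gives $\|\Psi\|_{L^\alpha}\leq |\Omega|^{1/\alpha-1/2}\|\Psi_0\|_{L^2}$, so
\[
\bigl\|\bigl(\phi_\epsilon\ast(\lambda|\Psi|^\alpha)\bigr)|\Psi|^2\bigr\|_{L^1}\leq C(\epsilon)|\lambda|\,\|\Psi\|_{L^\alpha}^\alpha\,\|\Psi_0\|_{L^2}^2\leq C_2(\epsilon,\|\Psi_0\|_{L^2},\lambda).
\]
Thus one can take $C_1=0<\hbar^2/(2m)$ and a constant $C_2$ depending only on problem data. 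With all hypotheses of section \ref{hyps} verified, Theorem \ref{EU} produces the desired unique weak solution $\Psi_\epsilon$. The only subtle point is bookkeeping the dependencies on $\epsilon$ and $r$ (which blow up as $\epsilon\to 0$); I expect the main obstacle to be not any single estimate but the disciplined separation of the three potential types so that the LDA-specific exponent restrictions in Definition \ref{Def2.1} are correctly used where the sign matters and ignored where it does not.
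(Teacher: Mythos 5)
Your proposal is correct and does prove Proposition \ref{2.1}, but it takes a systematically different estimation route than the paper, and the difference matters downstream. You uniformly trade $\epsilon$-regularity for $\epsilon$-dependent constants: every mollified potential is pushed into $W^{1,\infty}$ via $\|\phi_\epsilon\|_{L^{\infty}}$, $\|\nabla\phi_\epsilon\|_{L^{\infty}}$, and the constraint (\ref{constraint}) is then satisfied with $C_{1}=0$ and an $\epsilon$-dependent $C_{2}$, so that in particular no smallness of $|\lambda|$ is needed. This is a valid and even sharper conclusion \emph{for the smoothed problem at fixed $\epsilon$}. The paper, by contrast, measures the smoothed LDA and Coulomb terms in $W^{1,3}$, and --- critically for the $\lambda<0$ LDA term --- derives the bound (\ref{proplamb}) by first dropping the mollifier using Young's inequality with $\|\phi_\epsilon\|_{L^{1}}=1$ and then applying H\"older, so that $C_{1}$ is proportional to $|\lambda|$ but \emph{independent of $\epsilon$}; this is precisely why $|\lambda|$ small appears in Definition \ref{Def2.1}. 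Similarly, the Coulomb estimate (\ref{Coulomb1})--(\ref{Coulomb2}) is set up so that $C_{1}$ can be made arbitrarily small by choice of $\eta$, not merely zero with an $\epsilon$-blowup in $C_{2}$. The reason the paper insists on this is that (\ref{Coulomb1})--(\ref{Coulomb2}) and (\ref{proplamb}) are explicitly reused in Proposition \ref{3.1}, where one needs ${\mathcal G}_\epsilon(t)$ to be bounded uniformly in $\epsilon$ to obtain the a priori $H^{1}_{0}$ estimates underlying the compactness argument; with your $C_{2}(\epsilon)$, that reuse fails. So the two proofs are interchangeable for Proposition \ref{2.1} alone, but the paper's version is the one that prefigures section 3, and the $|\lambda|$ smallness you correctly note is superfluous for the smoothed system is genuinely required for the limit $\epsilon\to 0$.
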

\begin{proof}
We observe that the time-history term, if present, is assumed to satisfy
the assumptions of section \ref{hyps}. This includes 
(\ref{constraint}) and (\ref{sufsmall}), 
which are required to hold
in the aggregate, 
inclusive of all nonpositive terms for the potential $\Phi_{\epsilon}$. 
The Coulomb potential does not depend on $t$ or
$\rho$; although the unsmoothed potential fails to be in $W^{1,3}$, 
its smoothing is in this space. 
Since individual terms of 
$\phi_{\epsilon} \ast \Phi_{\rm c}$ may be negatively signed, we
estimate the collective potential. We show that this potential satisfies 
(\ref{constraint}) and
(\ref{sufsmall}), with $C_{1}$ preselected to be arbitrarily small.
Initially, we estimate, for $\eta> 0$ arbitrary, 
\begin{equation}
\label{Coulomb1}
\|(\phi_{\epsilon} \ast \Phi_{\rm c}) |\Psi|^{2} \|_{L^{1}} \leq 
(1/2)[\eta^{2} \|(\phi_{\epsilon} \ast \Phi_{\rm c}) \Psi\|_{L^{2}}^{2} 
+ \eta^{-2} 
\|\Psi\|_{L^{2}}^{2}].
\end{equation}
By the H\"{o}lder inequality, with conjugate indices $p=3, 
p^{\prime} = 3/2$, we have
\begin{equation}
\label{Coulomb2}
\|(\phi_{\epsilon} \ast \Phi_{\rm c}) \Psi\|_{L^{2}}^{2} \leq 
[\|\phi_{\epsilon} \ast \Phi_{\rm c}\|_{L^{3}}  
\|\Psi\|_{L^{6}}]^{2} \leq 
[\|\phi_{1} \|_{L^{3}} \|\Phi_{\rm c}\|_{L^{1}}
\|\Psi\|_{L^{6}}]^{2}.  
\end{equation}
By the equivalence of norms on $H^{1}_{0}$, and by Sobolev's inequality,
we may select $\eta$ so that (\ref{sufsmall}) holds for any preselected
$C_{1}$. 
This verifies the final requirement for the Coulomb potential.

For the smoothing of $\Phi_{\rm lda}$, 
we state the three properties required to be verified.
\begin{enumerate}
\item
$ \Phi_{\epsilon}$ 
maps sets bounded in 
$H^{1}_{0}$ 
into sets
bounded in 
$W^{1,3}$.
\item
The Lipschitz property (\ref{ecfollowsH}) holds.
\item
If $\lambda < 0$, 
$\|\phi_{\epsilon} \ast \Phi_{\rm lda} (\rho) |\Psi|^{2} \|_{L^{1}} \leq 
C_{1} \|\nabla \Psi \|_{H^{1}_{0}}^{2}$,
where $C_{1}$ does not depend on $t$ and satisfies (\ref{sufsmall}).
This is a case where $C_{2} = 0$.
\end{enumerate}
Before verifying properties (1) and (2), we note that there is no
restriction on the size of 
$|\lambda|$, 
and the range of 
$\alpha$ 
is
$1 \leq \alpha < 4$, 
whatever the sign of 
$\lambda$.

Property (1) is immediate from the inequalities,
$$
\|\phi_{\epsilon} \ast \Phi_{\rm lda}(\rho)\|_{L^{3}} 
\leq |\lambda| \; \|\phi_{\epsilon}\|_{L^{3}} \||\Psi|^{\alpha}\|_{L^{1}},
\;\;
\|\nabla \phi_{\epsilon} \ast \Phi_{\rm lda}(\rho)\|_{L^{3}} 
\leq |\lambda|\; \|\nabla \phi_{\epsilon}\|_{L^{3}} \||\Psi|^{\alpha}\|_{L^{1}},
$$
which follow from Young's inequality, applied to the convolution.
Indeed, recall that 
$\alpha < 4$, 
so
that the Sobolev inequality may be applied.

For the verification of property (2), we  begin with the gradient term,
and specifically with the product rule 
as applied to the definition of 
$\phi_{\epsilon} \ast \Phi_{\rm lda}/|\lambda|$:
$$
\|\nabla [(\phi_{\epsilon}\ast |\Psi_{1}|^{\alpha}-
\phi_{\epsilon}\ast |\Psi_{2}|^{\alpha}) \psi]\|_{L^{2}}
=
$$
\begin{equation}
\label{te}
\|\nabla \phi_{\epsilon}\ast (|\Psi_{1}|^{\alpha}-
|\Psi_{2}|^{\alpha}) \psi\ + 
\phi_{\epsilon}\ast (|\Psi_{1}|^{\alpha}-
|\Psi_{2}|^{\alpha}) \nabla \psi
\|_{L^{2}}.
\end{equation} 
We have used the differentiation property of the convolution. 
When the triangle inequality is employed, the second term is the more
delicate to estimate since 
$\nabla \psi \in L^{2}$ 
(only). 
Thus, by use of the
Schwarz inequality and Young's inequality, we must estimate
$
\||\Psi_{1}|^{\alpha}-
|\Psi_{2}|^{\alpha} \|_{L^{1}}.
$ 
The case 
$\alpha = 1$ 
is immediate.
We prepare for the cases 
$1 < \alpha < 4$ 
by citing the following useful numerical inequality
\cite{LS}:
\begin{equation}
\label{leach}
\left(\frac{y^{r} - z^{r}}{y^{s} - z^{s}} \frac{s}{r} \right)
^{\frac{1}{r-s}} \leq \max(y,z), 
\; y \geq 0, z \geq 0, y \not=z, r>0, s>0, s \not=r.
\end{equation}
We apply (\ref{leach}) with the identifications.
$$
r = \alpha, s = 1, y = |\Psi_{1}|, z = |\Psi_{2}|, 
$$
to obtain the pointwise estimate, which holds almost
everywhere in $\Omega$,
\begin{equation}
\label{estPhi}
|\;|\Psi_{1}|^{\alpha}-|\Psi_{2}|^{\alpha}|
\leq \alpha (\max(|\Psi_{1}|, |\Psi_{2}|))^{\alpha - 1}
\;|\;|\Psi_{1}| - |\Psi_{2}|\;|. 
\end{equation}
Although we will require inequality (\ref{estPhi}) later in the article,
it is more convenient here to use the less sharp inequality, derived from
(\ref{estPhi}):
$$
|\;|\Psi_{1}|^{\alpha}-|\Psi_{2}|^{\alpha}|
\leq \alpha (1 + |\Psi_{1}|+ |\Psi_{2}|)^{\alpha}
\;|\;|\Psi_{1}| - |\Psi_{2}|\;|. 
$$
We use a technique motivated by \cite{Caz}. If 
$r = \alpha + 2$,
and 
$r^{\prime}$ 
is conjugate to 
$r$, 
if 
$p = r/r^{\prime}$, 
and
$p^{\prime}$ 
is conjugate to
$p$, 
then
\begin{equation}
\label{successiveindices}
\alpha r^{\prime} p^{\prime} = r, \; r^{\prime} p = r,
\end{equation}
and an application of H\"{o}lder's inequality gives
$$
\|\;|\Psi_{1}|^{\alpha}-|\Psi_{2}|^{\alpha}\|_{L^{r^{\prime}}}
\leq \alpha \|1 + |\Psi_{1}|+ |\Psi_{2}|\|_{L^{r}}^{\alpha}
\|\;|\Psi_{1}| - |\Psi_{2}|\;\|_{L^{r}} 
\leq C\|\Psi_{1} - \Psi_{2}\|_{L^{r}}. 
$$ 
An application of Sobolev's inequality shows that the rhs of this
inequality 
is dominated by a locally bounded constant times 
$\|\Psi_{1} - \Psi_{2}\|_{H^{1}}$. 
Since the 
$L^{1}$ 
norm is dominated by a constant times 
the 
$L^{r^{\prime}}$ 
norm, the estimation of the second term 
arising from (\ref{te}) is completed. The first term 
also reduces to the estimation of
$
\||\Psi_{1}|^{\alpha}-
|\Psi_{2}|^{\alpha} \|_{L^{1}},
$ 
as does the non-gradient term.
Thus, the proof of property 
(2) is completed. 

For property (3), which corresponds to $\lambda < 0$ and $1
\leq \alpha \leq 4/3$,  
we consider the following estimate
via two applications of H\"{o}lder's inequality:
\begin{equation}
\label{proplamb}
|\lambda|\left| \int_{\Omega} 
|\phi_{\epsilon} \ast \Psi_{\epsilon}|^{\alpha} |\Psi_{\epsilon}|^{2} 
\; d{\bf x} \right|  
\leq |\lambda| \; |\Omega|^{2/3 - \alpha/2}  
\|\Psi_{\epsilon}\|_{L^{2}}^{\alpha} \; \|\Psi_{\epsilon} \|_{L^{6}}^{2}. 
\end{equation}
Since the $L^{2}$ norm of $\Psi = \Psi_{\epsilon}$ is specified in 
(\ref{constraint}), 
$\lambda$ can be chosen to 
satisfy (\ref{sufsmall}) by use of the Sobolev embedding theorem. 
It follows that a unique weak solution
$\Psi_{\epsilon}$ 
exists for the smoothed system as formulated. 
\end{proof}
\section{Existence}
\setcounter{remark}{1}
The results of this section are derived for an arbitrary time interval
$[0,T]$. They are directed toward the existence statement in Theorem
\ref{central}.
The compactness techniques are motivated by \cite{Caz}.

\subsection{`A priori' bounds for the smoothed solutions}
\label{cofe}
We begin by quoting a result proved in \cite{J1}, now applied to the 
family of solutions 
$\Psi_{\epsilon}$. {\color{blue}{We have absorbed constants into the conserved
Hamiltonian quantities associated with $\Phi_{\epsilon}$.
Thus, $\lambda \mapsto \frac{2 \lambda}{\alpha
+2}$ 
with a similar statement for the leading term of $\Phi$.}} 
\begin{lemma}
\label{lemma3.1}
If the functional 
${\mathcal E}(t)$ 
is defined 
for 
$0 < t \leq T$ 
by,  
\begin{equation}
\label{Eoft}
{\mathcal E}(t) =
\int_{\Omega}\left[\frac{{\hbar}^{2}}{4m}|\nabla \Psi_{\epsilon}|^{2} 
+ 
\left(\frac{1}{4}(W \ast |\Psi_{\epsilon}|^{2})+ \frac{1}{2} 
(V+\Phi_{\epsilon}(\cdotp, t,\rho_{\epsilon}))\right)
|\Psi_{\epsilon}|^{2}\right]d{\bf x},
\end{equation}
then the following identity holds:
\begin{equation}
{\mathcal E}(t)={\mathcal E}(0)
+
\frac{1}{2}\int_{0}^{t}\int_{\Omega}[(\partial V/\partial s)({\bf x},s)
+ \phi({\bf x}, s)]
|\Psi_{\epsilon}|^{2}\;d{\bf x}ds,
\label{consener}
\end{equation}
where 
${\mathcal E}(0)$
is given by
$$ 
\int_{\Omega}\left[\frac{{\hbar}^{2}}{4m}|\nabla
\Psi_{0}|^{2}+\left(\frac{1}{4} 
(W\ast|\Psi_{0}|^{2})+\frac{1}{2}
(V(\cdotp,0) + \Phi_{\epsilon}(\cdotp, 0, \rho_{0})
\right)|\Psi_{0}|^{2}\right]
\;d{\bf x}.
$$
\end{lemma}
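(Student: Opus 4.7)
The plan is to reduce to the energy identity already proved in \cite{J1}. Proposition \ref{2.1} has just verified that, after smoothing, $\Phi_{\epsilon}$ fits into the framework of section \ref{hyps}; in particular, $\partial_{t}\Phi_{\epsilon}$ plays the role of the $\phi$ appearing there, and $\Psi_{\epsilon}$ is the weak solution furnished by Theorem \ref{EU}. Consequently, the identity (\ref{consener}) should follow as a direct application of the corresponding result in \cite{J1}, with the functional $\mathcal{E}(t)$ defined in (\ref{Eoft}) inheriting the same conservation law.

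For completeness I would sketch the formal derivation as follows. Test the weak equation (\ref{wsol}) with $\zeta$ chosen to mimic $\partial_{t}\Psi_{\epsilon}$, and take real parts: the left-hand side $i\hbar\int|\partial_{t}\Psi_{\epsilon}|^{2}$ is purely imaginary and drops out. The kinetic contribution becomes $\frac{d}{dt}\int\frac{\hbar^{2}}{4m}|\nabla\Psi_{\epsilon}|^{2}$, via the identity $\mathrm{Re}(\nabla\Psi_{\epsilon}\cdot\overline{\nabla\partial_{t}\Psi_{\epsilon}})=\frac{1}{2}\partial_{t}|\nabla\Psi_{\epsilon}|^{2}$. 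Since $W$ is even, swapping the convolution variables gives $\int(W\ast\rho_{\epsilon})\partial_{t}\rho_{\epsilon}=\frac{1}{2}\partial_{t}\int(W\ast\rho_{\epsilon})\rho_{\epsilon}$, which explains the factor $\frac{1}{4}$ in the Hartree term of $\mathcal{E}(t)$. The remaining $V$ and $\Phi_{\epsilon}$ contributions are reorganized via $V\partial_{t}\rho_{\epsilon}=\partial_{t}(V\rho_{\epsilon})-(\partial_{t}V)\rho_{\epsilon}$ and $\Phi_{\epsilon}\partial_{t}\rho_{\epsilon}=\partial_{t}(\Phi_{\epsilon}\rho_{\epsilon})-\phi\,\rho_{\epsilon}$, which account for the coefficient $\frac{1}{2}$ in front of $(V+\Phi_{\epsilon})|\Psi_{\epsilon}|^{2}$ and produce the source term $\frac{1}{2}\int(\partial_{s}V+\phi)|\Psi_{\epsilon}|^{2}$ upon integrating in time from $0$ to $t$.

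The principal obstacle is the rigorous use of $\partial_{t}\Psi_{\epsilon}$ as a test function, since only $\Psi_{\epsilon}\in C(J;H^{1}_{0})\cap C^{1}(J;H^{-1})$ is available from Definition \ref{weaksolution}. This is exactly the point handled in \cite{J1} by means of the evolution operator of \cite{J2}: the solution is obtained as a limit of iterates carrying enough added time-regularity for the manipulation above to be classical, and the conservation law is stable under the passage to the limit. Since Proposition \ref{2.1} has already placed the smoothed system within that framework, no further argument is required beyond invoking the result from \cite{J1}.
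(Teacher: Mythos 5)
Your central move—reduce to the energy identity in \cite{J1}, invoking Proposition \ref{2.1} to place the smoothed system within the hypotheses of section \ref{hyps}—is exactly what the paper does; Lemma \ref{lemma3.1} is stated there as a quotation from \cite{J1} with no independent proof supplied. The formal derivation you sketch is a helpful addition and correctly accounts for the factors of $\tfrac14$ (Hartree, via evenness of $W$) and $\tfrac12$ ($V$ and $\Phi_\epsilon$). One step in that sketch deserves more care, though: the rearrangement $\Phi_\epsilon\,\partial_t\rho_\epsilon = \partial_t(\Phi_\epsilon\rho_\epsilon) - \phi\,\rho_\epsilon$ silently identifies the total time derivative of $\Phi_\epsilon(\cdot,t,\rho_\epsilon(\cdot,t))$ with the explicit partial derivative $\phi = \partial\Phi/\partial t$. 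For the time-history component in \cite{J1}'s structure $\Phi = \Phi(\cdot,0,\rho_0) + \int_0^t\phi\,ds$ this is fine, and the Coulomb part is $\rho$-independent, but for the smoothed LDA piece $\phi_{\epsilon}\ast(\lambda\rho^{\alpha/2})$ the chain rule produces an additional term $\partial_\rho\Phi_{\rm lda}\cdot\partial_t\rho$ which is not captured by $\phi$ and which does not vanish when $\alpha>0$. This subtlety is inherited from the paper's own appeal to \cite{J1} here, so it does not mark your argument as deviating from the paper's route, but your formal sketch should not be read as establishing the identity for the nonlinear LDA contribution without further justification.
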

\begin{proposition}
\label{3.1}
The kinetic term is bounded above by a natural splitting.
For each fixed $t$:
\begin{equation*}
\frac{{\hbar}^{2}}{4m}\int_{\Omega} |\nabla
\Psi_{\epsilon}|^{2}\;d{\bf x} \leq {\mathcal F}_{\epsilon}(t) +
{\mathcal G}_{\epsilon}(t). 
\end{equation*} 
Here, ${\mathcal F}_{\epsilon}(t)$ is a quantity which can be bounded
above, independently of $t$ and $\epsilon$, in a manner depending only on
the data of the problem. It is given explicitly by
\begin{equation*}
{\mathcal F}_{\epsilon}(t) = {\mathcal E}(0) + \frac{1}{2} 
\int_{0}^{t} \int_{\Omega}
[(\partial V/\partial s)({\bf x}, s) + \phi({\bf x},
s)]|\Psi_{\epsilon}|^{2} d{\bf x} ds - \frac{1}{2} \int_{\Omega}  
V({\bf x}, t) |\Psi_{\epsilon}|^{2}d{\bf x}.
\end{equation*}
Moreover, ${\mathcal G}_{\epsilon}(t)$ can be estimated as the sum of two
terms: the first can be absorbed into the kinetic term, while the second 
is independent of $\epsilon$ and $t$. ${\mathcal G}_{\epsilon}(t)$
is given explicitly by
\begin{equation*}
{\mathcal G}_{\epsilon}(t) =
- \frac{1}{2}\int_{\Omega} 
\Phi_{\epsilon}(\rho_{\epsilon}) |\Psi_{\epsilon}|^{2}d{\bf x}.
\end{equation*}
\end{proposition}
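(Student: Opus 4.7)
The plan is to extract the kinetic term directly from the energy identity (\ref{consener}) of Lemma \ref{lemma3.1}. Solving that identity for $\tfrac{\hbar^{2}}{4m}\int_{\Omega}|\nabla\Psi_{\epsilon}|^{2}\,d\mathbf{x}$ yields an equality whose right-hand side consists of $\mathcal{E}(0)$, the drift $\tfrac12\int_{0}^{t}\!\!\int_{\Omega}(\partial V/\partial s+\phi)|\Psi_{\epsilon}|^{2}\,d\mathbf{x}\,ds$, and the three negative contributions $-\tfrac14\int_{\Omega}(W\ast|\Psi_{\epsilon}|^{2})|\Psi_{\epsilon}|^{2}$, $-\tfrac12\int_{\Omega} V|\Psi_{\epsilon}|^{2}$, and $-\tfrac12\int_{\Omega}\Phi_{\epsilon}|\Psi_{\epsilon}|^{2}$. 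Since $W=1/|\mathbf{x}|\geq 0$, the Hartree self-energy is nonnegative and may be discarded to produce the desired inequality, and the remaining terms group cleanly into the stated $\mathcal{F}_{\epsilon}(t)+\mathcal{G}_{\epsilon}(t)$.

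For $\mathcal{F}_{\epsilon}(t)$, I would invoke conservation of $\|\Psi_{\epsilon}(t)\|_{L^{2}}=\|\Psi_{0}\|_{L^{2}}$, a standard consequence of the weak-solution framework of \cite{J1}, together with the boundedness hypotheses on $V$, $\partial V/\partial t$, and $\phi$ from section \ref{hyps}. Each of the three integrals in $\mathcal{F}_{\epsilon}$ is then dominated by a constant depending only on $\Omega$, $T$, $\Psi_{0}$, and the data. For $\mathcal{E}(0)$ the gradient contribution is controlled by $\Psi_{0}\in H^{1}_{0}$, the Hartree contribution at $t=0$ by Young's inequality, and the initial smoothings $\phi_{\epsilon}\ast\Phi_{\mathrm c}$ and $\phi_{\epsilon}\ast\Phi_{\mathrm{lda}}(\rho_{0})$ by the contraction property $\|\phi_{\epsilon}\ast f\|_{L^{p}}\leq\|f\|_{L^{p}}$, giving a bound uniform in $\epsilon$.

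The substantive step is the analysis of $\mathcal{G}_{\epsilon}(t)=-\tfrac12\int_{\Omega}\Phi_{\epsilon}|\Psi_{\epsilon}|^{2}\,d\mathbf{x}$. I would decompose $\Phi_{\epsilon}$ into its LDA, smoothed-Coulomb, and time-history constituents. Wherever a constituent is nonnegative, the corresponding contribution to $\mathcal{G}_{\epsilon}$ is nonpositive and may be dropped; for each possibly negative constituent I would invoke the aggregate bound (\ref{constraint})--(\ref{sufsmall}), whose validity for $\Phi_{\epsilon}$ was verified in Proposition \ref{2.1}, to obtain
\begin{equation*}
\mathcal{G}_{\epsilon}(t)\ \leq\ \tfrac12 C_{1}\,\|\nabla\Psi_{\epsilon}\|_{L^{2}}^{2}\ +\ \tfrac12 C_{2},
\end{equation*}
with $C_{1}<\hbar^{2}/(2m)$ and $C_{2}$ depending only on problem data. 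The first summand is the piece to be absorbed into the kinetic term on the left of the inequality, leaving a strictly positive residual coefficient on $\|\nabla\Psi_{\epsilon}\|_{L^{2}}^{2}$, while the second supplies the promised $\epsilon$- and $t$-independent remainder.

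The main obstacle is the bookkeeping in this last step: one must verify that when all negative constituents are collected, the aggregate $C_{1}$ remains strictly below $\hbar^{2}/(2m)$, uniformly in $\epsilon$. The free parameter $\eta$ in (\ref{Coulomb1}) renders the Coulomb contribution to $C_{1}$ arbitrarily small; the smallness of $|\lambda|$ in Definition \ref{Def2.1} together with (\ref{proplamb}) controls the LDA contribution; and the time-history hypothesis of section \ref{hyps} supplies the remaining budget. Uniformity in $\epsilon$ is automatic, since the estimates in Proposition \ref{2.1} exploit only the contraction property of the convolution, not the scale of $\phi_{\epsilon}$.
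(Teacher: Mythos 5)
Your proposal is correct and follows the same route as the paper: solve the energy identity (\ref{Eoft})--(\ref{consener}) for the kinetic term, discard the nonnegative Hartree self-energy (since $W\geq 0$ on the ball of radius $\mathrm{diam}\,\Omega$ and $\rho\geq 0$), bound $\mathcal{F}_{\epsilon}$ via the boundedness of $V$, $\partial V/\partial t$, $\phi$, conservation of $\|\Psi_{\epsilon}\|_{L^{2}}$, and the contraction property $\|\phi_{\epsilon}\ast f\|_{L^{p}}\leq\|f\|_{L^{p}}$ applied to the smoothed Coulomb and LDA contributions in $\mathcal{E}(0)$, and bound $\mathcal{G}_{\epsilon}$ by the aggregate constraint (\ref{constraint})--(\ref{sufsmall}) with $C_{1}<\hbar^{2}/(2m)$. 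Your write-up makes explicit two things the paper leaves implicit (the algebraic derivation of the splitting by dropping the Hartree term, and the $L^{2}$-conservation used for the $V$ and drift terms), but the underlying argument is identical.
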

\begin{proof}
\begin{itemize}
\item
The estimation of ${\mathcal F}_{\epsilon}(t)$
\end{itemize}
We notice that $V, \partial V/ \partial t, \phi$ are bounded on the finite
measure space-time domain $\Omega \times [0, T]$, so that the estimation 
of ${\mathcal F}_
{\epsilon}(t)$ reduces to the analysis of the smoothed term 
in ${\mathcal E}(0)$ given by 
\begin{equation*}
\int_{\Omega} \Phi_{\epsilon}(\cdotp, 0, \rho_{0})|\Psi_{0}|^{2} \; d {\bf x}.
\end{equation*}
Since the time-history, if present, is not smoothed, and acts boundedly, it
suffices to examine the Coulomb and LDA potentials. 
\begin{itemize}
\item
The Coulomb term.
\end{itemize}
By the Schwarz inequality and Young's inequality, we estimate
\begin{equation*}
\|(\phi_{\epsilon} \ast \Phi_{\rm c}) |\Psi_{0}|^{2}\|_{L^{1}} \leq 
\|\phi_{1} \|_{L^{2}} \|\Phi_{\rm c}\|_{L^{1}}
\|\Psi_{0}\|_{L^{4}}^{2}.  
\end{equation*}
An application of Sobolev's inequality concludes the argument.
\begin{itemize}
\item
The LDA term.
\end{itemize}
This is a direct estimate:
\begin{equation*}
\|\phi_{\epsilon} \ast \Phi_{\rm lda}(\rho_{0})|\Psi_{0}|^{2}\|_{L^{1}}
\leq \|\phi_{\epsilon} \ast |\Psi|^{\alpha} \|_{L^{3/2}}
\|\Psi_{0} \|_{L^{6}}^{2} \leq \|\phi_{1} \|_{L^{3/2}}
\||\Psi_{0}|^{\alpha} \|_{L^{1}}   
\|\Psi_{0} \|_{L^{6}}^{2}. 
\end{equation*}
Since $\alpha < 4$, the estimate follows as previously from the embedding
theorems.
\begin{itemize}
\item
The estimation of ${\mathcal G}_{\epsilon}(t)$
\end{itemize}
This represents the more delicate part of the proof.  
\begin{itemize}
\item
The time-history term.
\end{itemize}
If the term, 
$$
\Phi({\bf x}, t, \rho)= \Phi({\bf x}, 0, \rho) + 
\int_{0}^{t} \phi({\bf x}, s, \rho) \; ds,
$$  
is included, and the leading term fails to be a positive functional, then
we have required that (\ref{constraint}, \ref{sufsmall}) hold, here as
applied to $\Psi_{\epsilon}$. This is consistent with the structure of 
${\mathcal G}_{\epsilon}$ as stated. The integral term has been discussed
in the previous part and is bounded. Note that (\ref{sufsmall}) is
required to hold for the {\it aggregate} potential, including those
components to be discussed now. We shall mention this at the appropriate
time. 
\begin{itemize}
\item
The Coulomb term.
\end{itemize}
We use the core of the argument as developed in the proof of Proposition 
\ref{2.1}. Indeed, for any preselected $C_{1}$, inequality
(\ref{sufsmall}) can be satisfied. This follows directly from 
(\ref{Coulomb1}) and (\ref{Coulomb2} with a proper choice of $\eta$.
\begin{itemize}
\item
The LDA term.
\end{itemize}
This pertains to the case $\lambda < 0$ if this term is included.
We have already derived the relevant inequality, viz.\thinspace, 
(\ref{proplamb}) near the conclusion of the proof of Proposition \ref{2.1}. 
This inequality is required here also. 

In order to satisfy (\ref{sufsmall}) in the aggregate sense, we reason as
follows. We accept the time-history term as given, if at all. We choose
$\lambda$ so that the sum of the LDA potential and time-history potential
continues to satisfy this inequality. This can be extended to a finite
number of such terms. 
Finally, we have shown that the
Coulomb potential can be included so as to maintain this inequality.
This concludes the proof.
\end{proof}
The following corollary is immediate from the equivalence of norms on
$H^{1}_{0}$.
\begin{corollary}
\label{Hbound}
There is a bound 
$r_{0}$ 
in the norm of 
$C(J; H^{1}_{0})$ 
for the smoothed
solutions. 
\end{corollary}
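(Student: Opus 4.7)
The plan is to combine the decomposition produced by Proposition \ref{3.1} with the smallness hypothesis (\ref{sufsmall}), and then translate a bound on the $L^{2}$ norm of the gradient into a bound on the full $H^{1}_{0}$ norm via Poincar\'{e}'s inequality. Since Proposition \ref{2.1} already furnishes $\Psi_{\epsilon} \in C(J; H^{1}_{0})$ for each fixed $\epsilon>0$, all that remains is to produce a bound on the $H^{1}_{0}$ norm that is uniform in both $t \in J$ and $\epsilon$.

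More precisely, I would begin from the inequality
\[
\frac{\hbar^{2}}{4m}\int_{\Omega}|\nabla \Psi_{\epsilon}|^{2}\,d{\bf x} \leq {\mathcal F}_{\epsilon}(t) + {\mathcal G}_{\epsilon}(t),
\]
given by Proposition \ref{3.1}. By that proposition, ${\mathcal F}_{\epsilon}(t)$ admits a bound depending only on the data (not on $t$ or $\epsilon$). The term ${\mathcal G}_{\epsilon}(t)$ is controlled using the assumption that the aggregate of the nonpositive components of $\Phi_{\epsilon}$ obeys (\ref{constraint}); that is,
\[
\Bigl|\int_{\Omega}\Phi_{\epsilon}(\rho_{\epsilon})|\Psi_{\epsilon}|^{2}\,d{\bf x}\Bigr|
\leq C_{1}\|\nabla \Psi_{\epsilon}\|_{L^{2}}^{2} + C_{2},
\]
with $C_{1}$ and $C_{2}$ independent of $\epsilon$ and $t$. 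This is exactly what Proposition \ref{2.1} was designed to deliver in the aggregate sense, using the preselected smallness choices for the Coulomb and LDA components (and the given time-history term).

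Substituting, and collecting the kinetic-type contribution on the left,
\[
\Bigl(\frac{\hbar^{2}}{4m} - \frac{C_{1}}{2}\Bigr)\|\nabla \Psi_{\epsilon}\|_{L^{2}}^{2} \leq {\mathcal F}_{\epsilon}(t) + \tfrac{1}{2}C_{2}.
\]
Because (\ref{sufsmall}) guarantees $C_{1} < \hbar^{2}/(2m)$, the coefficient on the left is strictly positive, so $\|\nabla \Psi_{\epsilon}\|_{L^{2}}$ is bounded by a constant depending only on the problem data, uniformly in $t \in J$ and in $\epsilon > 0$.

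Finally, by the equivalence of the gradient seminorm and the full $H^{1}_{0}$ norm on the bounded Lipschitz domain $\Omega$ (Poincar\'{e}'s inequality), this yields $\|\Psi_{\epsilon}(t)\|_{H^{1}_{0}} \leq r_{0}$ for all $t \in J$ and all $\epsilon > 0$, giving the claimed bound in $C(J; H^{1}_{0})$. The only subtle point in the argument is making sure that the absorption of $\mathcal{G}_{\epsilon}$ into the kinetic term is compatible with (\ref{sufsmall}) for the sum of all nonpositive contributions; this is precisely the aggregate-sense reasoning already carried out at the end of the proof of Proposition \ref{3.1}, so no additional work is needed here.
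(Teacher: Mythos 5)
Your argument is correct and is essentially the same route the paper takes: the paper's proof consists of the single remark that the corollary is ``immediate from the equivalence of norms on $H^{1}_{0}$,'' with the substance coming from Proposition~\ref{3.1} exactly as you have spelled out (absorb $\mathcal{G}_{\epsilon}$ into the kinetic term using (\ref{constraint}) and (\ref{sufsmall}), then apply Poincar\'{e}). You have simply made explicit the bookkeeping that the paper leaves implicit.
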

\begin{proposition}
\label{3.2}
There is a uniform bound, in 
$t \in J$ 
and 
$\epsilon > 0$, 
for the norms,
$$
\|(\Psi_{\epsilon})_{t}\|_{H^{-1}}. 
$$
\end{proposition}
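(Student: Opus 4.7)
The plan is to test the weak formulation of Proposition \ref{2.1} against an arbitrary $\zeta \in H^1_0$ and dominate each term of the right-hand side by a constant times $\|\zeta\|_{H^1_0}$, with the constant depending only on the problem data. The two uniform inputs are the $H^1_0$ bound $r_0$ from Corollary \ref{Hbound} (which, via the Sobolev embedding $H^1_0 \hookrightarrow L^6$, controls $\|\Psi_\epsilon\|_{L^q}$ for $1 \leq q \leq 6$) and the identity $\|\phi_\epsilon\|_{L^1}=1$, which yields $\|\phi_\epsilon \ast g\|_{L^p} \leq \|g\|_{L^p}$ independently of $\epsilon$. Every potential term will be handled by a three-factor H\"older pairing of the shape $\|A \Psi_\epsilon \zeta\|_{L^1} \leq \|A\|_{L^p}\|\Psi_\epsilon\|_{L^q}\|\zeta\|_{L^6}$, with $1/p + 1/q = 5/6$.

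The routine pieces dispatch quickly: the kinetic term contributes $(\hbar^2/2m)\|\nabla \Psi_\epsilon\|_{L^2}\|\nabla \zeta\|_{L^2}$; the external $V$ is bounded on $\overline{\Omega \times J}$; the Hartree potential $W \ast \rho_\epsilon$ sits uniformly in $L^\infty$, since the truncated $W$ lies in $L^2(\Omega)$ and $\rho_\epsilon$ is uniformly bounded in $L^2$; and the time-history $\Phi(\cdot,t,\rho_\epsilon)$ is uniformly bounded in $W^{1,3}$ by the hypotheses of section \ref{hyps}. The two remaining pieces require the mollifier observation above. For the smoothed LDA term, Young's inequality gives $\|\phi_\epsilon \ast (\lambda|\Psi_\epsilon|^\alpha)\|_{L^{6/\alpha}} \leq |\lambda|\,\|\Psi_\epsilon\|_{L^6}^\alpha$ uniformly in $\epsilon$, and the restriction $\alpha < 4$ from Definition \ref{Def2.1} keeps the conjugate partner $6/(5-\alpha)$ in $[1,6]$, so the triple H\"older estimate closes. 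For the smoothed Coulomb term, the key observation is that each kernel $1/|\cdot - \mathbf{x}_j|$ lies in $L^2(\Omega)$ because $\Omega$ is a bounded subset of $\mathbb{R}^3$, so $\|\phi_\epsilon \ast \Phi_c\|_{L^2} \leq \|\Phi_c\|_{L^2}$ independently of $\epsilon$, after which pairing with $\Psi_\epsilon \in L^3$ and $\zeta \in L^6$ completes the term.

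The main obstacle is precisely the Coulomb contribution: the convenient bound $\|\phi_\epsilon \ast \Phi_c\|_{L^3} \leq \|\phi_\epsilon\|_{L^3}\|\Phi_c\|_{L^1}$ used inside Proposition \ref{2.1} blows up as $\epsilon \to 0$ and is therefore unusable here; one must instead exploit the genuine $L^p$-membership of $\Phi_c$ for $p < 3$ so that the mollifier is only asked to preserve a norm it already respects with constant one. Once this is arranged, assembling the four kinds of estimates gives $|\langle (\Psi_\epsilon)_t, \zeta\rangle| \leq C\|\zeta\|_{H^1_0}$ with $C$ independent of $t \in J$ and $\epsilon > 0$, which is the asserted uniform $H^{-1}$ bound.
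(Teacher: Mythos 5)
Your proposal follows the same overall strategy as the paper's proof: test the weak form against $\zeta$ with $\|\zeta\|_{H^1_0}\leq 1$, then dominate each potential term by a three-factor H\"older pairing using the uniform $H^1_0$ bound from Corollary~\ref{Hbound}. The mollifier property $\|\phi_\epsilon\ast g\|_{L^p}\leq\|g\|_{L^p}$, which you state at the outset, is also the hidden engine in the paper's LDA estimate (see the step $\|\phi_\epsilon\ast|\Psi_\epsilon|^\alpha\|_{L^{r/\alpha}}\leq\||\Psi_\epsilon|^\alpha\|_{L^{r/\alpha}}$ in \eqref{sucHolder}). Your minor deviations — the $L^\infty$ bound on $W\ast\rho_\epsilon$ via $W\in L^2(\Omega)$, and the $(6/\alpha,\,6/(5-\alpha),\,6)$ H\"older triple for the LDA term instead of the paper's $(r/\alpha,\,r,\,r)$ triple with $r=\alpha+2$ — are cosmetic reshufflings of exponents and close under exactly the same hypothesis $\alpha<4$.

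The substantive difference is your treatment of the Coulomb term, and there your version is actually the more careful one. The paper applies Young's inequality in the form $\|\phi_\epsilon\ast\Phi_{\rm c}\|_{L^2}\leq\|\phi_\epsilon\|_{L^2}\|\Phi_{\rm c}\|_{L^1}$ (written in the text as $\|\phi_1\|_{L^2}\|\Phi_{\rm c}\|_{L^1}$). By the scaling $\|\phi_\epsilon\|_{L^p}=\epsilon^{3/p-3}\|\phi_1\|_{L^p}$, the quantity $\|\phi_\epsilon\|_{L^2}$ grows like $\epsilon^{-3/2}$, so as written the paper's bound is not uniform in $\epsilon$, which is precisely what Proposition~\ref{3.2} demands. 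You avoid this by flipping the Young pairing to $\|\phi_\epsilon\ast\Phi_{\rm c}\|_{L^2}\leq\|\phi_\epsilon\|_{L^1}\|\Phi_{\rm c}\|_{L^2}=\|\Phi_{\rm c}\|_{L^2}$, using that the Coulomb kernel lies in $L^2(\Omega)$ on a bounded domain. This gives a genuinely $\epsilon$-uniform constant and is the estimate the proof actually needs. (The same flip would also tidy the Coulomb estimate inside the proof of Proposition~\ref{3.1}.) Your proof is correct, and on the Coulomb term it repairs what appears to be a scaling slip in the paper.
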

\begin{proof}
One begins by using the weak form of the equation as discussed in
Proposition \ref{2.1},  
and isolating the time
derivative acting on an arbitrary test function $\zeta, \|\zeta
\|_{H^{1}_{0}} \leq 1$.  
The gradient term 
is bounded 
by Corollary \ref{Hbound}, while the bound for the 
external potential term follows directly from the hypothesis on $V$. 
For the Hartree term, we estimate,
by H\"{o}lder's inequality and Young's inequality, for each $t \in J$,
$$
\left|\int_{\Omega} W \ast |\Psi_{\epsilon}|^{2} \; \Psi_{\epsilon}  \zeta
\right|
\leq \|W\|_{L^{1}} \;  
\|\Psi_{\epsilon}\|_{L^{3}}^{2} \|\Psi_{\epsilon}\|_{L^{6}} 
\; \|\zeta\|_{L^{6}}. 
$$ 
Sobolev's inequality, combined with Proposition \ref{3.1}, gives the bound
for this term. 

We now consider the components of the quantum correction
potential.
\begin{itemize}
\item
The LDA term.
\end{itemize}
For the
smoothed LDA term, the sign of $\lambda$ is not relevant
and we consider 
$1 \leq \alpha < 4$. 
We estimate by H\"{o}lder's inequality,
for 
$r=\alpha +2$ 
and
$r^{\prime}$ 
conjugate to 
$r$, for each $t \in J$, 
$$
\left|\int_{\Omega} \phi_{\epsilon} \ast |\Psi_{\epsilon}|^{\alpha}
\; \Psi_{\epsilon}  \zeta
\right| \leq
\|\phi_{\epsilon}\ast|\Psi_{\epsilon}|^{\alpha}
\; \Psi_{\epsilon}\|_{L^{r^{\prime}}} 
\|\zeta\|_{L^{r}}. 
$$
The first factor on the rhs requires additional
explanation. We have, by another application of H\"{o}lder's inequality,
with 
$p = r/r^{\prime}$ 
and 
$p^{\prime}$ 
conjugate to 
$p$ 
(note that 
$r/\alpha = r^{\prime} p^{\prime}$), 
$$
\|\phi_{\epsilon}\ast|\Psi_{\epsilon}|^{\alpha}
\; \Psi_{\epsilon}\|_{L^{r^{\prime}}} 
\leq
\|\phi_{\epsilon} \ast |\Psi_{\epsilon}|^{\alpha}\|_{L^{r/\alpha}}  
\|\Psi_{\epsilon}\|_{L^{r}}
\leq 
\||\Psi_{\epsilon}|^{\alpha}\|_{L^{r/\alpha}}  
\|\Psi_{\epsilon}\|_{L^{r}}
$$
\begin{equation}
\label{sucHolder}
\leq
\|\Psi_{\epsilon}\|_{L^{r}}^{\alpha + 1}.
\end{equation}
We conclude
that the LDA term is bounded in the dual norm, as claimed.
\begin{itemize}
\item
The Coulomb term.
\end{itemize}
By the Schwarz inequality and Young's inequality, uniformly in $t$, 
\begin{equation*}
\left|\int_{\Omega} \phi_{\epsilon} \ast \Phi_{\rm c} 
\; \Psi_{\epsilon}  \zeta
\right| \leq
\|\phi_{1} \|_{L^{2}} \|\Phi_{\rm c}\|_{L^{1}}
\|\Psi_{\epsilon}\|_{L^{4}} \;  
\|\zeta\|_{L^{4}},   
\end{equation*}
and the estimate is completed by Sobolev's inequality.
\begin{itemize}
\item
Time-history term.
\end{itemize}
By Proposition \ref{3.1}, the smoothed solutions are bounded in
$H^{1}_{0}$, uniformly in $t$, so that, by the first hypothesis in section
\ref{hyps}, the functions $\Phi(\cdotp, t, \Psi_{\epsilon})$ have a uniform 
$H^{1}_{0}$ bound. It follows as in previous estimates that the term,
\begin{equation*}
\int_{\Omega} \Phi(\cdotp, 0, \Psi_{\epsilon}) \; \Psi_{\epsilon} \zeta
\; d{\bf x},
\end{equation*} 
defines a functional which
is bounded in the dual norm.
\end{proof}
The following corollary is an immediate consequence of 
Corollary \ref{Hbound} and Proposition
\ref{3.2}.
\begin{corollary}
Any sequence taken from the set 
$\{\Psi_{\epsilon}\}$ 
of 
solutions of the smoothed systems 
is bounded in the norms of 
$C(J; H^{1}_{0})$ 
and  
$C^{1}(J; H^{-1})$.
\end{corollary}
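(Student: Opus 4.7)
The plan is to observe that this corollary truly is a one-line synthesis of the two preceding results, so the work consists mainly of unpacking the norms involved. First, I would recall that the norm on $C(J;H^1_0)$ is $\sup_{t\in J}\|\cdot\|_{H^1_0}$ and the norm on $C^1(J;H^{-1})$ is $\sup_{t\in J}\|\cdot\|_{H^{-1}}+\sup_{t\in J}\|\partial_t(\cdot)\|_{H^{-1}}$. The uniform-in-$\epsilon$ bound for the first summand of the $C^1(J;H^{-1})$-norm, and for the full $C(J;H^1_0)$-norm, is supplied immediately by Corollary \ref{Hbound}: indeed the Poincar\'e-equivalent $H^1_0$ bound $r_0$ dominates the $L^2$-norm, and hence the $H^{-1}$-norm, via the continuous embeddings $H^1_0(\Omega)\hookrightarrow L^2(\Omega)\hookrightarrow H^{-1}(\Omega)$ on the bounded Lipschitz domain $\Omega$.

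Next, I would invoke Proposition \ref{3.2} directly for the second summand: it already asserts that $\|(\Psi_\epsilon)_t\|_{H^{-1}}$ is bounded uniformly in $t\in J$ and $\epsilon>0$, which is precisely the bound on $\sup_t\|\partial_t\Psi_\epsilon\|_{H^{-1}}$ that the $C^1(J;H^{-1})$-norm requires. Combining the two estimates gives a single constant, depending only on the data and on $r_0$, that bounds every $\Psi_\epsilon$ simultaneously in both norms; any sequence extracted from the family therefore inherits the same bound.

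There is no serious obstacle here: the regularity $\Psi_\epsilon\in C(J;H^1_0)\cap C^1(J;H^{-1})$ was already secured by Proposition \ref{2.1} through the application of Theorem \ref{EU}, so continuity in time is not in question and only the uniform-in-$\epsilon$ size of the norms needs to be checked. The only small subtlety worth noting in the write-up is the implicit use of the embedding $H^1_0\hookrightarrow H^{-1}$ to promote the $C(J;H^1_0)$ bound into the $C(J;H^{-1})$ part of the $C^1(J;H^{-1})$-norm; beyond that, the argument is a direct assembly of the two cited results.
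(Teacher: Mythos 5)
Your proposal is correct and matches the paper's approach exactly: the paper states the corollary as an immediate consequence of Corollary \ref{Hbound} (the uniform $C(J;H^1_0)$ bound $r_0$) and Proposition \ref{3.2} (the uniform bound on $\|(\Psi_\epsilon)_t\|_{H^{-1}}$), which is precisely the assembly you carry out. Your remark about promoting the $H^1_0$ bound into the $H^{-1}$ part of the $C^1(J;H^{-1})$-norm via the embedding chain is a reasonable bit of bookkeeping that the paper leaves implicit.
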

\subsection{Convergent subsequences}
\label{Convsub}
We begin by stating the two  basic lemmas 
derived from the propositions in 
Appendix B.
These are due, in the form stated there, to the authors of 
\cite{Caz} and \cite{Simon}, resp.
\begin{lemma}
\label{l3.2}
There is an element  
$\Psi \in L^{\infty}(J; H^{1}_{0}(\Omega)) \cap W^{1, \infty}(J;
H^{-1}(\Omega))$, 
and a sequence 
$\Psi_{\epsilon_{n}}$ 
satisfying the weak
convergence property,
\begin{equation}
\label{weakh1allt}
\Psi_{\epsilon_{n}}(t) \rightharpoonup \Psi(t), \; \mbox{in} \;
H^{1}_{0}, \;  \forall t \in J. 
\end{equation}
\end{lemma}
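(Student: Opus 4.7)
The plan is to exploit the uniform bounds in $C(J; H^1_0)$ and $C^1(J; H^{-1})$ provided by Corollary \ref{Hbound} and Proposition \ref{3.2}, together with abstract weak-compactness arguments of Arzel\`a--Ascoli / Banach--Alaoglu type as developed in \cite{Caz} and \cite{Simon} and recalled in Appendix B. First, from the $C(J; H^1_0)$ bound the family $\{\Psi_\epsilon\}$ is bounded in $L^\infty(J; H^1_0) = (L^1(J; H^{-1}))^{*}$, so the Banach--Alaoglu theorem yields a subsequence $\Psi_{\epsilon_n}$ converging weak-$*$ to some $\Psi \in L^\infty(J; H^1_0)$; similarly, the uniform bound on the time derivatives produces a weak-$*$ limit in $L^\infty(J; H^{-1})$ which, by passage to the limit in the distributional time derivative, must coincide with $\Psi_t$. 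Hence $\Psi$ already lies in the stated regularity class.

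The substantive step is upgrading this time-integrated statement to the pointwise-in-$t$ weak convergence \eqref{weakh1allt}. I fix a countable set $\{\zeta_k\}$ dense in $H^1_0$ (hence also in $L^2$) and consider the scalar functions $f_{n,k}(t) := (\Psi_{\epsilon_n}(t), \zeta_k)_{L^2}$. Writing
\[
f_{n,k}(t) - f_{n,k}(s) = \int_s^t \langle (\Psi_{\epsilon_n})_\tau, \zeta_k \rangle \, d\tau,
\]
the uniform $L^\infty(J; H^{-1})$ bound on $(\Psi_{\epsilon_n})_t$ yields $|f_{n,k}(t) - f_{n,k}(s)| \leq C \|\zeta_k\|_{H^1_0} |t-s|$, while the uniform $C(J; H^1_0)$ bound gives $|f_{n,k}(t)| \leq C \|\zeta_k\|_{L^2}$. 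Applying Arzel\`a--Ascoli for each $k$ and then Cantor diagonalization in $k$ produces a further subsequence (still denoted $\Psi_{\epsilon_n}$) along which $f_{n,k}$ converges uniformly on $J$ to a limit $g_k$, for every $k$.

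Finally, for fixed $t \in J$ the sequence $\Psi_{\epsilon_n}(t)$ lies in a bounded ball of the reflexive Hilbert space $H^1_0$ and so has $H^1_0$-weakly convergent subsequences; any cluster point $\widetilde\Psi(t)$ satisfies $(\widetilde\Psi(t), \zeta_k)_{L^2} = g_k(t)$ for every $k$, and density of $\{\zeta_k\}$ in $L^2$ forces this cluster point to be unique. The entire sequence $\Psi_{\epsilon_n}(t)$ therefore converges weakly in $H^1_0$ to a well-defined $\Psi(t)$, which coincides a.e.\ with the weak-$*$ limit obtained in the first step, so the two representatives may be identified. The main obstacle is precisely this upgrade from an a.e.-in-$t$ statement to a for-every-$t$ statement: without a time-derivative control, Banach--Alaoglu alone would give only the weaker assertion, and the uniform $H^{-1}$ bound supplied by Proposition \ref{3.2} is the crucial ingredient that delivers the equicontinuity needed to push the Arzel\`a--Ascoli step through at every $t \in J$.
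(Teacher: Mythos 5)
Your argument is correct, and it is, in effect, a self-contained proof of the abstract compactness lemma that the paper simply cites. The paper's own proof of Lemma~\ref{l3.2} is a one-line appeal to Corollary~\ref{Hbound} and Proposition~\ref{3.2} for the uniform bounds, followed by an invocation of Proposition~\ref{B1}(1) (Cazenave's Proposition 1.3.14(i)). What you have written reproduces the standard proof of that proposition: a weak-$*$ limit in $L^\infty(J;H^1_0)$ from Banach--Alaoglu, equicontinuity of the scalar observables $t\mapsto(\Psi_{\epsilon_n}(t),\zeta_k)_{L^2}$ extracted from the uniform $W^{1,\infty}(J;H^{-1})$ bound, Arzel\`a--Ascoli with a diagonal subsequence over the countable dense set $\{\zeta_k\}$, and identification of the unique weak cluster point of $\{\Psi_{\epsilon_n}(t)\}$ at each fixed $t$. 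So the two routes are mathematically identical in substance; yours is more explicit, while the paper delegates to a reference. Two small remarks. First, your opening paragraph is logically optional: the pointwise-in-$t$ construction in the second and third paragraphs already yields $\Psi(t)$ for every $t\in J$, and the $L^\infty(J;H^1_0)$ and $W^{1,\infty}(J;H^{-1})$ bounds pass to the limit by lower semicontinuity and the difference-quotient estimate, so the Banach--Alaoglu step mainly serves to pre-identify $\Psi_t$. Second, to place $\Psi$ in $L^\infty(J;H^1_0)$ as a Bochner class one should note that $t\mapsto(\Psi(t),\zeta_k)$ is continuous for each $k$, hence $\Psi$ is weakly (and, by Pettis, strongly) measurable; you implicitly use this when invoking the duality $(L^1(J;H^{-1}))^*=L^\infty(J;H^1_0)$, and it is worth stating since the separability of $H^1_0$ is what makes it work.
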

\begin{proof}
The preceding corollary, coupled with Proposition \ref{B1}, part (1), 
 furnishes the necessary argument. 
\end{proof}
\begin{lemma}
\label{l3.3}
Suppose $r < 6$ is fixed.
A subsequence of the sequence in 
(\ref{weakh1allt})
may be assumed to converge in $C(J;
L^{r}(\Omega))$.
\end{lemma}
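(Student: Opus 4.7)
The plan is to apply a Simon-type compactness theorem (the second result cited in Appendix B, from \cite{Simon}) to the family $\{\Psi_\epsilon\}$. The uniform bounds in $C(J; H^1_0)$ from Corollary \ref{Hbound} and in $C^1(J; H^{-1})$ from Proposition \ref{3.2} provide exactly the ingredients such a theorem requires, with the interpolating space taken to be $L^r$ for $r < 6$. The abstract template is: a family uniformly bounded in $L^\infty(J;X)$ and uniformly equicontinuous as $Z$-valued functions, with $X \hookrightarrow\hookrightarrow Y \hookrightarrow Z$, is relatively compact in $C(J;Y)$. I take $X = H^1_0$, $Y = L^r$, $Z = H^{-1}$.

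First I would upgrade the $C^1(J; H^{-1})$ bound to uniform equicontinuity into $H^{-1}$. If $M$ bounds $\|(\Psi_\epsilon)_t\|_{L^\infty(J;H^{-1})}$ uniformly in $\epsilon$ (Proposition \ref{3.2}), then
$$\|\Psi_\epsilon(t) - \Psi_\epsilon(s)\|_{H^{-1}} \leq \int_s^t \|(\Psi_\epsilon)_\tau\|_{H^{-1}} \, d\tau \leq M|t-s|,$$
for all $s,t \in J$ and all $\epsilon > 0$, so the family is uniformly Lipschitz into $H^{-1}$.

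Second, on the bounded Lipschitz domain $\Omega \subset {\mathbb R}^3$ the Rellich--Kondrachov theorem furnishes the compact embedding $H^1_0 \hookrightarrow\hookrightarrow L^r$ for every $r < 6$ (the critical Sobolev exponent in three dimensions being $6$), together with the continuous inclusion $L^r \hookrightarrow H^{-1}$: for $r \geq 6/5$ this follows from $L^r \hookrightarrow L^{6/5}$ and duality with $H^1_0 \hookrightarrow L^6$, and for smaller $r$ one obtains the conclusion by first proving strong convergence in a larger $L^{r'}$ and invoking $L^{r'} \hookrightarrow L^r$ on $\Omega$. Combined with the uniform $H^1_0$-boundedness from Corollary \ref{Hbound} and the equicontinuity established above, Simon's compactness theorem yields a subsequence $\Psi_{\epsilon_n}$ that converges strongly in $C(J; L^r)$ to some $\tilde\Psi$.

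Finally, the limit must coincide with the $\Psi$ of Lemma \ref{l3.2}: for each $t \in J$, the weak convergence $\Psi_{\epsilon_n}(t) \rightharpoonup \Psi(t)$ in $H^1_0$ implies weak convergence to $\Psi(t)$ in $L^r$, and uniqueness of weak limits forces $\tilde\Psi(t) = \Psi(t)$ everywhere on $J$. The only real obstacle is the bookkeeping required to match the precise statement of Simon's theorem in the Appendix to the triple $(H^1_0, L^r, H^{-1})$; no genuinely new estimate is needed beyond those already assembled in Subsection \ref{cofe}.
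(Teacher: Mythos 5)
Your proposal is correct and follows essentially the same route as the paper: derive Lipschitz equicontinuity into $H^{-1}$ from the fundamental theorem of calculus and Proposition~\ref{3.2}, combine with the uniform $H^1_0$ bound and the compact embedding $H^1_0\hookrightarrow\hookrightarrow L^r$ ($r<6$), and invoke Proposition~\ref{B2}. You are also appropriately candid about the one bridging step (passing from $H^{-1}$-equicontinuity to $L^r$-equicontinuity on an $L^r$-precompact range, an Ehrling-type fact), which the paper's own proof likewise treats as implicit.
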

\begin{proof}
The equicontinuity of the sequence from $J$ to $H^{1}_{0}$  
is derived from the fundamental theorem of calculus
applied on an arbitrary subinterval, together with the boundedness
estimates in the dual space.
The compact embedding of $H^{1}_{0} \mapsto L^{r}$, coupled with
Proposition \ref{B2}, furnishes the necessary remaining details. 
We have identified $Y$ with $L^{r}$ here. 
\end{proof}
We divide the verification of Theorem \ref{central} into two parts. 
\begin{theorem}
\label{central1}
The function 
$\Psi$ 
of Lemma \ref{l3.2} 
satisfies the TDDFT system discussed in Theorem \ref{central} with the
quantum corrections.
\end{theorem}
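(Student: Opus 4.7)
The plan is to pass to the limit $\epsilon_n \to 0$ in the weak formulation satisfied by each $\Psi_{\epsilon_n}$ from Proposition \ref{2.1}. Because the time derivative appears only in the dual space $H^{-1}$, I would first rewrite the weak equation in time-integrated form by pairing it against $\zeta \otimes \chi$ with $\zeta \in H^{1}_{0}$ and $\chi \in C^{\infty}_{c}(0, T)$, then transfer the time derivative onto $\chi$ by integration by parts. This reduces the task to showing that each of the four potential-times-$\Psi_{\epsilon_n} \zeta \chi$ integrals converges to its natural limit, after which the distributional time derivative of $\Psi$ is identified with the limit of the right-hand side and the pointwise-in-$t$ weak identity can be recovered.

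Next I would dispose of the linear and mild-nonlinearity terms. The gradient term passes by the weak $H^{1}_{0}$ convergence (\ref{weakh1allt}) and dominated convergence in $t$, using the uniform bound of Corollary \ref{Hbound}. The external potential term passes by boundedness of $V$ and the $C(J; L^2)$ convergence from Lemma \ref{l3.3}. For the Hartree term, strong convergence $\Psi_{\epsilon_n} \to \Psi$ in $C(J; L^{r})$ for some $r \in (3,6)$ yields $|\Psi_{\epsilon_n}|^{2} \to |\Psi|^{2}$ in $C(J; L^{r/2})$, and Young's inequality combined with the estimates already used in Proposition \ref{3.2} gives convergence of $W \ast |\Psi_{\epsilon_n}|^{2} \cdot \Psi_{\epsilon_n}$ to the corresponding limit in $L^{1}(J; L^{1})$. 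The smoothed Coulomb term is even easier: $\phi_{\epsilon_n} \ast \Phi_{\rm c} \to \Phi_{\rm c}$ strongly in every $L^{p}$ with $p < 3$, multiplied against $\Psi_{\epsilon_n} \zeta$, which is strong in a conjugate Lebesgue space via Lemma \ref{l3.3}. The time-history term passes by the strong $L^{2}$-in-$\rho$ continuity of $\Phi$ explicitly assumed in item (3) of section \ref{qcsection}, applied to $\rho_{\epsilon_n} \to \rho$ in $C(J; L^{2})$.

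The main obstacle is the smoothed LDA term $\phi_{\epsilon_n} \ast |\Psi_{\epsilon_n}|^{\alpha}$, where both the smoother and the nonlinearity vary with $n$. My plan is to decompose
\begin{equation*}
\phi_{\epsilon_n} \ast |\Psi_{\epsilon_n}|^{\alpha} - |\Psi|^{\alpha}
= \phi_{\epsilon_n} \ast \bigl(|\Psi_{\epsilon_n}|^{\alpha} - |\Psi|^{\alpha}\bigr)
+ \bigl(\phi_{\epsilon_n} \ast |\Psi|^{\alpha} - |\Psi|^{\alpha}\bigr).
\end{equation*}
The second piece vanishes in $L^{r/\alpha}$ by the approximate-identity property, since $|\Psi|^{\alpha} \in L^{r/\alpha}$ for $r = \alpha + 2 < 6$. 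The first piece I control by the pointwise bound (\ref{estPhi}), Hölder's inequality with the exponents (\ref{successiveindices}) already exploited in Proposition \ref{2.1}, Sobolev embedding, and the $L^{p}$-contractivity $\|\phi_{\epsilon_n} \ast f\|_{L^{p}} \le \|f\|_{L^{p}}$ of the smoother, so that the whole piece is dominated by a locally bounded constant times $\|\Psi_{\epsilon_n} - \Psi\|_{L^{r}}$, which tends to zero uniformly in $t$ by Lemma \ref{l3.3}. Multiplying by $\Psi_{\epsilon_n}$ and pairing with $\zeta \chi$ then closes the argument via one more Hölder/Sobolev step, exactly as in (\ref{sucHolder}).

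Finally, from the limit identity valid for all $\zeta \otimes \chi$, I would recover (\ref{wsol}) pointwise in $t$ by undoing the integration by parts and using the density of tensor products in $L^{2}(J; H^{1}_{0})$. The initial condition $\Psi(0) = \Psi_{0}$ follows because each $\Psi_{\epsilon_n}(0) = \Psi_{0}$ and Lemma \ref{l3.2} gives weak $H^{1}_{0}$ convergence at $t = 0$; hence the limit $\Psi$ solves the TDDFT system with the full quantum corrections, as claimed. I expect the LDA decomposition to be the only step requiring genuine care, since every other term either sits in a linear functional class or falls inside the continuity hypothesis already imposed on $\Phi$.
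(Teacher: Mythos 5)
Your proposal follows essentially the same route as the paper: term-by-term passage to the limit using the weak $H^{1}_{0}$ convergence of Lemma~\ref{l3.2}, the strong $C(J;L^{r})$ convergence of Lemma~\ref{l3.3}, the generalized H\"{o}lder/Young/Sobolev estimates, and, for the smoothed LDA term, the pointwise inequality (\ref{estPhi}) together with the approximate-identity property of the mollifier and the exponent bookkeeping from (\ref{successiveindices}). The only substantive difference is that you wrap the weak equation in a time-integrated form against $\zeta\otimes\chi$ and recover the pointwise identity by a density argument at the end, whereas the paper passes to the limit pointwise in $t$ and then invokes (\ref{deducefrom}); your handling of the time-derivative term is a slightly more cautious rendering of the same step, and everything else coincides.
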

\begin{proof}
By Lemma \ref{l3.3}, by relabelling if necessary, it follows that
\begin{equation}
\label{strongrallt}
\Psi_{\epsilon_{n}}(t) \rightarrow  \Psi(t), \; \mbox{in} \;
L^{r}, \; \mbox{uniformly} \; \forall t \in J, 
\end{equation}
for an arbitrary $r< 6$ selected in advance. 
It follows that 
$\Psi \in C(J; L^{r})$. 
We now examine the equation satisfied by 
$\Psi$. 
By weak convergence (Lemma \ref{l3.2}),
\begin{equation}
\label{lim1}
\lim_{n \rightarrow \infty}
\int_{\Omega} \frac{{\hbar}^{2}}{2m}
\nabla \Psi_{\epsilon_{n}}({\bf x}, t)\cdotp \nabla \zeta({\bf x}) 
\; d{\bf x} 
= \int_{\Omega} \frac{{\hbar}^{2}}{2m}
\nabla \Psi({\bf x}, t)\cdotp \nabla  \zeta({\bf x}) 
\; d{\bf x}. 
\end{equation}
We now consider each of the three cases required to verify that 
\begin{equation}
\label{lim2}
\lim_{n \rightarrow \infty}
\int_{\Omega} 
V_{\rm e}({\bf x},t,\rho_{\epsilon_{n}}) 
\Psi_{\epsilon_{n}}({\bf x},t)  \zeta({\bf x})\; d{\bf x} = 
\int_{\Omega} 
V_{\rm e}({\bf x},t,\rho) 
\Psi({\bf x},t)  \zeta({\bf x})\; d{\bf x}.
\end{equation}
By the boundedness of the external potential, and the strong convergence of
the sequence, we conclude immediately that, for each $t$, 
\begin{equation}
\label{limpot1}
\lim_{n \rightarrow \infty}
\int_{\Omega} 
V({\bf x},t) 
\Psi_{\epsilon_{n}}({\bf x},t)  \zeta({\bf x})\; d{\bf x} = 
\int_{\Omega} 
V({\bf x},t) 
\Psi({\bf x},t) { \zeta}({\bf x})\; d{\bf x}.
\end{equation}
For the Hartree potential, we will use the triangle inequality. Thus, we
begin by writing,
\begin{eqnarray*}
\int_{\Omega} 
W \ast \rho_{\epsilon_{n}} 
\Psi_{\epsilon_{n}}({\bf x},t)  \zeta({\bf x})\; d{\bf x} &-& 
\int_{\Omega} 
W \ast \rho \;
\Psi({\bf x},t)  \zeta({\bf x})\; d{\bf x} = 
 \\
\int_{\Omega} 
W \ast \rho_{\epsilon_{n}} 
[\Psi_{\epsilon_{n}}({\bf x},t)-\Psi({\bf x}, t)] \zeta({\bf x})\; d{\bf x}
&+& \int_{\Omega} 
W \ast [\rho_{\epsilon_{n}} - \rho]
\Psi({\bf x}, t) \zeta({\bf x})\; d{\bf x}.
\end{eqnarray*}
Each of the two rhs terms is estimated by the generalized
H\"{o}lder inequality.
This reduces to estimating the following two triple products of norms:
$$
\|W \ast \rho_{\epsilon_{n}}\|_{L^{2}} 
\|\Psi_{\epsilon_{n}}(t)-\Psi(t)\|_{L^{3}}  \|\zeta\|_{L^{6}}, \; \;
\|W \ast [\rho_{\epsilon_{n}} - \rho]\|_{L^{2}}
\|\Psi(t)\|_{L^{3}} 
\|\zeta\|_{L^{6}}.
$$
For the first triple product, Young's inequality is applied to the convolution
term, followed by $L^{2}$ boundedness; 
$L^{3}$ convergence is applied to the second term of the first product; 
and Sobolev's inequality is applied to the third term.
For the second triple product, the only term requiring explanation is the 
convolution term of the product. We estimate as follows.
$$
\|W \ast [\rho_{\epsilon_{n}} - \rho]\|_{L^{2}}
\leq \|W\|_{L^{2}} \|(|\Psi_{\epsilon_{n}}| - |\Psi|)
(|\Psi_{\epsilon_{n}}| + |\Psi|)\|_{L^{1}},
$$
which is estimated by the Schwarz inequality. An application of $L^{2}$
boundedness and $L^{2}$ convergence yields the final result:
\begin{equation}
\label{limpot2}
\lim_{n \rightarrow \infty}
\int_{\Omega} 
W \ast \rho_{\epsilon_{n}} 
\Psi_{\epsilon_{n}}({\bf x},t)  \zeta({\bf x})\; d{\bf x} = 
\int_{\Omega} 
W \ast \rho \;
\Psi({\bf x},t)  \zeta({\bf x})\; d{\bf x}.
\end{equation}
The potential $\Phi_{\rm qc}$ requires the analysis of the three 
components introduced in section \ref{qcsection}. 
For the smoothed LDA potential $\phi_{\epsilon} \ast \Phi_{\rm lda}$, 
we will use the triangle inequality, and
we write,
\begin{equation*}
\int_{\Omega} 
\phi_{\epsilon_{n}} \ast \Phi_{\rm lda}(\rho_{\epsilon_{n}})  
\Psi_{\epsilon_{n}}({\bf x},t)  \zeta({\bf x})\; d{\bf x} - 
\int_{\Omega} 
\Phi_{\rm lda}(\rho)
\Psi({\bf x},t)  \zeta({\bf x})\; d{\bf x} = 
\end{equation*}
\begin{equation*} 
\int_{\Omega} 
\phi_{\epsilon_{n}} \ast \Phi_{\rm lda}(\rho_{\epsilon_{n}})  
[\Psi_{\epsilon_{n}}({\bf x},t)-\Psi({\bf x}, t)] \zeta({\bf x})\; d{\bf x}
+ 
\end{equation*}
\begin{equation*} 
\int_{\Omega} 
[\phi_{\epsilon_{n}} \ast \Phi_{\rm lda}(\rho_{\epsilon_{n}})  -\Phi_{\rm lda}(\rho)]
\Psi({\bf x}, t) \zeta({\bf x})\; d{\bf x}.
\end{equation*}
We apply the H\"{o}lder inequality to each of the 
terms to 
obtain two products
of norms:
$$
\|\phi_{\epsilon_{n}} \ast \Phi_{\rm lda}(\rho_{\epsilon_{n}})  
[\Psi_{\epsilon_{n}}(t)-\Psi(t)]\|_{L^{r^{\prime}}} 
\|\zeta\|_{L^{r}}, \; \; 
\|[\phi_{\epsilon_{n}} \ast \Phi_{\rm lda}(\rho_{\epsilon_{n}})  -\Phi_{\rm lda}(\rho)]
\Psi(t)\|_{L^{r^{\prime}}} \| \zeta \|_{L^{r}},
$$
where $r = \alpha + 2$ and $r^{\prime}$ is conjugate to $r$.
We use the method employed in the proof of Proposition
\ref{3.2} (cf.\thinspace (\ref{sucHolder})) 
in order
to estimate the $L^{r^{\prime}}$ norms. 
For convenience, we suppress the scalar $|\lambda|$; also, $1 \leq \alpha
< 4$.
We have, for the first product,
$$
\|\phi_{\epsilon_{n}} \ast \Phi_{\rm lda}(\rho_{\epsilon_{n}})  
[\Psi_{\epsilon_{n}}(t)-\Psi(t)]\|_{L^{r^{\prime}}} \leq 
\|\phi_{\epsilon_{n}} \ast |\Psi_{\epsilon_{n}}|^{\alpha}\|_{L^{r/\alpha}}  
\|\Psi_{\epsilon_{n}}(t)-\Psi(t)]\|_{L^{r}} 
\leq 
$$
$$
\||\Psi_{\epsilon_{n}}|^{\alpha}\|_{L^{r/\alpha}}  
\|\Psi_{\epsilon_{n}}(t)-\Psi(t)]\|_{L^{r}} 
\leq 
\|\Psi_{\epsilon_{n}}\|_{L^{r}}^{\alpha}  
\|\Psi_{\epsilon_{n}}(t)-\Psi(t)]\|_{L^{r}}, 
$$
which converges to zero as remarked at the beginning of the proof
(see (\ref{strongrallt})).
Thus, the first product of norms is convergent to zero. For the second
product, we begin as before, to obtain,
$$
\|[\phi_{\epsilon_{n}} \ast \Phi_{\rm lda}(\rho_{\epsilon_{n}})  -\Phi_{\rm lda}(\rho)]
\Psi(t)\|_{L^{r^{\prime}}} \leq
\|\phi_{\epsilon_{n}} \ast \Phi_{\rm lda}(\rho_{\epsilon_{n}})  -\Phi_{\rm lda}(\rho)\|
_{L^{r/\alpha}} \|\Psi(t)\|_{L^{r}}.
$$
To estimate this, we apply the triangle inequality to the first factor:
$$
\|\phi_{\epsilon_{n}} \ast \Phi_{\rm lda}(\rho_{\epsilon_{n}})  -\Phi_{\rm lda}(\rho)\|
_{L^{r/\alpha}}  \leq
\|\phi_{\epsilon_{n}} \ast \Phi_{\rm lda}(\rho_{\epsilon_{n}}) - 
\phi_{\epsilon_{n}} \ast \Phi_{\rm lda}(\rho)\| 
_{L^{r/\alpha}} + 
$$
$$
\|\phi_{\epsilon_{n}} \ast \Phi_{\rm lda}(\rho) - \Phi_{\rm lda}(\rho)\|
_{L^{r/\alpha}}.
$$
The first term on the rhs is bounded, via the smoothing property, by 
$$
\|\phi_{\epsilon_{n}} \ast \Phi_{\rm lda}(\rho_{\epsilon_{n}}) - 
\phi_{\epsilon_{n}} \ast \Phi_{\rm lda}(\rho)\| 
_{L^{r/\alpha}} \leq 
\||\Psi_{\epsilon_{n}}|^{\alpha} - 
|\Psi|^{\alpha}\| 
_{L^{r/\alpha}}.
$$
The estimation of this expression 
requires inequality (\ref{estPhi}) with the identifications 
$\Psi_{1} \mapsto \Psi_{\epsilon_{n}}, \Psi_{2} \mapsto \Psi$. 
When the power $r/\alpha$ is applied to the inequality, 
and integration over $\Omega$ is carried out,
one can apply H\"{o}lder's inequality with $p = \alpha$ and $p^{\prime} = 
\alpha/(\alpha - 1)$ to conclude convergence. Convergence for the second
term is a consequence of the property of smoothing; since
$|\Psi|^{\alpha} \in L^{r/\alpha}$, its convolution is convergent in norm. 
Altogether, we have shown:
\begin{equation}
\label{limpot3}
\lim_{n \rightarrow \infty}
\int_{\Omega} 
\phi_{\epsilon_{n}} \ast \Phi_{\rm lda}
(\rho_{\epsilon_{n}}) 
\Psi_{\epsilon_{n}}({\bf x},t)  \zeta({\bf x})\; d{\bf x} = 
\int_{\Omega} 
\Phi_{\rm lda}(\rho) 
\Psi({\bf x},t)  \zeta({\bf x})\; d{\bf x}.
\end{equation}
We now consider the Coulomb term. Again, we write
\begin{eqnarray*}
\int_{\Omega} 
\phi_{\epsilon_{n}} \ast \Phi_{\rm c}  
\Psi_{\epsilon_{n}}({\bf x},t)  \zeta({\bf x})\; d{\bf x} &-& 
\int_{\Omega} 
\Phi_{\rm c}
\Psi({\bf x},t)  \zeta({\bf x})\; d{\bf x} = 
 \\
\int_{\Omega} 
\phi_{\epsilon_{n}} \ast \Phi_{\rm c}  
[\Psi_{\epsilon_{n}}({\bf x},t)-\Psi({\bf x}, t)] \zeta({\bf x})\; d{\bf x}
&+& 
\int_{\Omega} 
[\phi_{\epsilon_{n}} \ast \Phi_{\rm c}  -\Phi_{\rm c}]
\Psi({\bf x}, t) \zeta({\bf x})\; d{\bf x}.
\end{eqnarray*}
The estimation is now straightforward. The H\"{o}lder inequality yields
the two triple products for the rhs term estimates:
$$
\|\phi_{\epsilon_{n}} \ast \Phi_{\rm c}\|_{L^{2}} \;
\|\Psi_{\epsilon_{n}}(t) - \Psi(t) \|_{L^{3}} \;
\|\zeta\|_{L^{6}}, \;
\|\phi_{\epsilon_{n}} \ast \Phi_{\rm c} - \Phi_{\rm c} \|_{L^{2}} \;
\|\Psi(t)\|_{L^{3}} \; \|\zeta \|_{L^{6}}.
$$
The first term is convergent because of strong convergence; the second,
because of the convergence of the smoothing in $L^{2}$.

The final term to estimate among the quantum correction terms is the
time-history term, if present. Recall that this term is not smoothed. 
The term $\Phi(\cdotp, t, \rho)$
is analyzed as follows. We have the algebraic representation, 
\begin{equation*}
\int_{\Omega} \Phi(\cdotp, t, \rho_{\epsilon_{n}}) \; \Psi_{\epsilon_{n}} \zeta
\; d{\bf x} -
\int_{\Omega} \Phi(\cdotp, t, \rho) \; \Psi \zeta
\; d{\bf x} =
\end{equation*} 
\begin{equation*}
\int_{\Omega}[\Phi(\cdotp, t, \rho_{\epsilon_{n}}) - \Phi(\cdotp, t,
\rho)] \; 
\Psi_{\epsilon_{n}} \zeta
\; d{\bf x} \; + 
\end{equation*}
\begin{equation*}
\int_{\Omega}[\Phi(\cdotp, t, \rho)[\Psi_{\epsilon_{n}} - \Psi)] \zeta
\; d{\bf x}.
\end{equation*}
The first term converges to zero because of the 
assumed uniform $L^{2}$ continuity of $\Phi$ 
in its third argument, 
while
the second term is governed by the uniform convergence in $L^{r}$.

We now use (\ref{lim1}) and (\ref{lim2}) to conclude that
$$
\lim_{n \rightarrow \infty} \langle \partial \Psi_{\epsilon_{n}}/\partial
t, \zeta \rangle
= \int_{\Omega} \frac{{\hbar}^{2}}{2m}
\nabla \Psi({\bf x}, t)\cdotp \nabla  \zeta({\bf x}) 
+ V_{\rm e}({\bf x},t,\rho) 
\Psi({\bf x},t)  \zeta({\bf x})\; d{\bf x}.
$$
However, we may deduce from Lemma \ref{l3.2} that 
\begin{equation}
\label{deducefrom}
\lim_{n \rightarrow \infty} 
\langle \partial \Psi_{\epsilon_{n}}/\partial t, \zeta \rangle
= 
\langle \partial \Psi/\partial t, \zeta \rangle,
\end{equation}
so that $\Psi$ solves the TDDFT system. The initial condition is a
consequence of (\ref{strongrallt}) in, say, $L^{2}$ for $t=0$.
\end{proof}
It remains to verify the regularity class for $\Psi$.
\begin{theorem}
\label{central2}
The function $\Psi$ of Theorem \ref{central1} satisfies
$$\Psi \in C(J; H^{1}_{0}(\Omega)) \cap C^{1}(J;
H^{-1}(\Omega)).$$ 
\end{theorem}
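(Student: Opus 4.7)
The plan is to establish the two regularity claims by first upgrading $\Psi$ to weak continuity in $H^1_0$, then proving norm continuity of $t\mapsto \|\Psi(t)\|_{H^1_0}$ via passage to the limit in the conservation identity of Lemma \ref{lemma3.1}, and finally recovering $C^1(J;H^{-1})$ from the weak equation established in Theorem \ref{central1}.

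Weak continuity should follow from standard considerations: we already have $\Psi \in L^\infty(J; H^1_0) \cap W^{1,\infty}(J; H^{-1}) \subset L^\infty(J; H^1_0) \cap C(J; H^{-1})$. Since $H^1_0$ is reflexive and embeds densely into $H^{-1}$, a standard lemma (cf.\ \cite{Caz}) yields $\Psi \in C_w(J; H^1_0)$, i.e., $t \mapsto \langle \Psi(t), v \rangle_{H^1_0}$ is continuous for every $v\in H^1_0$.

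The heart of the argument will be passage to the limit in (\ref{consener}). Using Lemma \ref{l3.3} with an $r<6$ chosen sufficiently large (say $r = \alpha+2$), together with the uniform $H^1_0$ bound of Corollary \ref{Hbound}, I would repeat the convergence arguments used in Theorem \ref{central1}, with the test function $\zeta$ there replaced by $\bar\Psi_{\epsilon_n}$ or $\bar\Psi$, to show that each potential-energy term
\[
\int_\Omega V\,|\Psi_{\epsilon_n}|^2,\quad \int_\Omega (W\ast|\Psi_{\epsilon_n}|^2)|\Psi_{\epsilon_n}|^2,\quad \int_\Omega \Phi_{\epsilon_n}(\cdotp,t,\rho_{\epsilon_n})|\Psi_{\epsilon_n}|^2
\]
converges pointwise in $t$ to the analogous quantity formed with $\Psi$. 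Dominated convergence combined with conservation of the $L^2$ norm handles the time-integrated source on the RHS of (\ref{consener}). Weak lower semicontinuity of $\|\nabla\cdotp\|_{L^2}$ and the resulting identity then force $\|\nabla \Psi_{\epsilon_n}(t)\|_{L^2}\to \|\nabla \Psi(t)\|_{L^2}$ pointwise in $t$, and $\Psi$ itself inherits the limiting energy identity.

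From the limiting identity, the RHS is continuous in $t$ (absolutely continuous, in fact), and each limiting potential energy is continuous in $t$, since $\Psi \in C(J; L^r)$ and the continuity hypotheses of Section \ref{qcsection} transfer to the potentials evaluated on $\Psi$; hence $t \mapsto \|\nabla\Psi(t)\|_{L^2}$ is continuous. Weak continuity combined with norm continuity in the Hilbert space $H^1_0$ then yields $\Psi \in C(J; H^1_0)$. To obtain $\partial_t\Psi \in C(J; H^{-1})$, I would return to the weak equation and verify that its RHS, viewed as an element of $H^{-1}$, depends continuously on $t$: the gradient pairing is continuous by the just-established $C(J; H^1_0)$ regularity, while each of $V\Psi$, $(W\ast\rho)\Psi$, $\Phi_{\rm c}\Psi$, $\Phi(\cdotp,t,\rho)\Psi$, and $\Phi_{\rm lda}(\rho)\Psi = \lambda|\Psi|^\alpha\Psi$ defines a continuous map from $J$ into some $L^p$ with $p>6/5$, hence into $H^{-1}$. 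The main obstacle is the LDA term, but the pointwise inequality (\ref{estPhi}), H\"older's inequality with exponents $\alpha$ and $\alpha/(\alpha-1)$, and Sobolev embedding control $|\Psi(t)|^\alpha\Psi(t)-|\Psi(s)|^\alpha\Psi(s)$ in $L^{r/(\alpha+1)}$ for $r<6$ taken close enough to $6$ that $r/(\alpha+1)>6/5$, which is feasible precisely because $\alpha<4$.
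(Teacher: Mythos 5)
Your strategy is essentially the paper's: pass to the limit in the energy identity of Lemma~\ref{lemma3.1}, use strong $L^{r}$ convergence and weak $H^{1}_{0}$ convergence to pass the potential-energy terms to the limit, deduce convergence of $\|\nabla\Psi_{\epsilon_{n}}(t)\|_{L^{2}}$, upgrade the limit to $C(J;H^{1}_{0})$, and then obtain $C^{1}(J;H^{-1})$ by checking that the right-hand side of the weak equation is a continuous $H^{-1}$-valued map. You do, however, deviate from the paper in two noteworthy ways, both of which are reasonable variants. First, you reach $C(J;H^{1}_{0})$ via the classical ``weak continuity plus continuity of the norm implies strong continuity'' lemma in a Hilbert space, whereas the paper instead establishes \emph{uniform} (not merely pointwise) convergence of the gradient norms and invokes Proposition~\ref{B1}~(2), which delivers both $\Psi\in C(J;H^{1}_{0})$ and the stronger statement $\Psi_{\epsilon_{n}}\to\Psi$ in $C(J;H^{1}_{0})$; the latter convergence is needed again in Corollary~\ref{full}, so the paper's route does a bit more work and gets a bit more out. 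Second, for the $C^{1}(J;H^{-1})$ step the paper subtracts the weak equation at times $t$ and $s$ and estimates the difference uniformly over $\|\zeta\|_{H^{1}_{0}}\le 1$; your version, which verifies that each nonlinear term $|\Psi(t)|^{\alpha}\Psi(t)$ etc.\ is $t$-continuous into some $L^{p}$ with $p>6/5\hookrightarrow H^{-1}$, is the same estimate in slightly different clothing, and your exponent bookkeeping (taking $r<6$ close to $6$ so $r/(\alpha+1)>6/5$, possible because $\alpha<4$) is correct.

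One point worth flagging: your sentence that weak lower semicontinuity of $\|\nabla\cdotp\|_{L^{2}}$ together with the limiting identity ``forces'' $\|\nabla\Psi_{\epsilon_{n}}(t)\|_{L^{2}}\to\|\nabla\Psi(t)\|_{L^{2}}$ is imprecise. Lower semicontinuity only gives $\lim_{n}\|\nabla\Psi_{\epsilon_{n}}(t)\|_{L^{2}}^{2}\ge\|\nabla\Psi(t)\|_{L^{2}}^{2}$; equality requires the reverse inequality, which in turn requires knowing that the limit $\Psi$ itself satisfies the energy identity (or an independent argument such as time reversibility). The paper does not spell this identification out either — it merely shows the gradient term converges uniformly to \emph{something} determined by the limiting energy balance and then applies Proposition~\ref{B1}~(2) — so you are not missing anything the paper supplies, but you should not attribute the conclusion to lower semicontinuity alone.
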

\begin{proof}
We begin with the verification that $\Psi \in C(J; H^{1}_{0})$,
and make use of Proposition \ref{B1}, part (2), of appendix B.
In particular, it suffices to show that 
\begin{equation*}
\int_{\Omega}\frac{{\hbar}^{2}}{4m}|\nabla \Psi_{\epsilon_{n}}|^{2} 
\; d{\bf x}
\rightarrow 
\int_{\Omega}\frac{{\hbar}^{2}}{4m}|\nabla \Psi|^{2}\; d{\bf x}, \; n
\rightarrow \infty, \; \mbox{uniformly in} \; t. 
\end{equation*}
We use the representations contained in Lemma \ref{lemma3.1} as applied to
$\Psi_{\epsilon_{n}}$. We rewrite them as follows.
\begin{equation}
\label{Eoft2}
{\mathcal E}_{n}(t) =
\int_{\Omega}\left[\frac{{\hbar}^{2}}{4m}|\nabla \Psi_{\epsilon_{n}}|^{2} 
+ 
\left(\frac{1}{4}(W \ast |\Psi_{\epsilon_{n}}|^{2})+ \frac{1}{2} 
(V+\Phi_{\epsilon_{n}}(\cdotp, t,\rho_{\epsilon_{n}}))\right)
|\Psi_{\epsilon_{n}}|^{2}\right]d{\bf x},
\end{equation}
\begin{equation}
{\mathcal E}_{n}(t)={\mathcal E}(0)
+
\frac{1}{2}\int_{0}^{t}\int_{\Omega}[(\partial V/\partial s)({\bf x},s)
+ \phi({\bf x}, s)]
|\Psi_{\epsilon_{n}}|^{2}\;d{\bf x}ds.
\label{consener2}
\end{equation}
Note that the expression ${\mathcal E}_{n}(t)$, as defined in
(\ref{Eoft2}), converges uniformly in $t$ to
${\mathcal E}(t)$, 
when the boundedness for 
$\partial V/ \partial t + \phi$ is
applied, due to strong convergence. 
The approach now is to solve for the gradient term in
(\ref{Eoft2}) and deduce its uniform convergence from that of each of the other
terms. Because of the hypotheses made on the external potential and the
time-history terms,  
the terms requiring analysis are the Hartree and 
remaining quantum correction terms.
The techniques are similar to those used earlier. For the Hartree
potential, we have  
\begin{eqnarray*}
\int_{\Omega} 
W \ast \rho_{\epsilon_{n}}(t) \; 
\rho_{\epsilon_{n}}({\bf x},t) \; d{\bf x} &-& 
\int_{\Omega} 
W \ast \rho(t) \; 
\rho({\bf x},s) \; d{\bf x} = 
 \\
\int_{\Omega} 
W \ast \rho_{\epsilon_{n}}(t) 
[\rho_{\epsilon_{n}}({\bf x},t)-\rho({\bf x}, t)]\; d{\bf x}
&+& \int_{\Omega} 
W \ast [\rho_{\epsilon_{n}}(t)  - \rho(t)]
\rho({\bf x}, t) \; d{\bf x}.
\end{eqnarray*}
Each of the two rhs terms is estimated by the Schwarz inequality, so that
we must estimate the following two products of norms:
$$
\|W \ast \rho_{\epsilon_{n}}(t) \|_{L^{2}}
\|\rho_{\epsilon_{n}}(t) - \rho(t)\|_{L^{2}}, \;
\|W \ast [\rho_{\epsilon_{n}}(t)  - \rho(t)]\|_{L^{2}}
\|\rho(t) \|_{L^{2}}.
$$
For the first product, the first term is estimated by Young's inequality,
to obtain a quantity, bounded on $J$.  
We estimate the second factor as  
$$
\|\rho_{\epsilon_{n}}(t) - \rho(t)\|_{L^{2}} \leq 
\||\Psi_{\epsilon_{n}}(t)| - |\Psi(t)| \|_{L^{4}} 
\||\Psi_{\epsilon_{n}}(t)| + |\Psi(t)| \|_{L^{4}}, 
$$
which is convergent to zero as $n \rightarrow \infty$, by 
the strong uniform convergence. 
For the second product, 
an application of Young's inequality and the strong uniform convergence
allows one to conclude that
uniform convergence to zero as $n
\rightarrow \infty$.
Next, we consider the LDA term.
\begin{eqnarray*}
\int_{\Omega} 
\Phi_{\rm lda}(\rho_{\epsilon_{n}}(t)) \rho_{\epsilon_{n}}({\bf x}, t)
\; d{\bf x} &-& 
\int_{\Omega} 
\Phi_{\rm lda}(\rho (t)) \rho({\bf x}, t)
\; d{\bf x} = 
 \\
\int_{\Omega} 
\Phi_{\rm lda}(\rho_{\epsilon_{n}}(t))
[\rho_{\epsilon_{n}}({\bf x},t)-\rho({\bf x}, t)] \; d{\bf x}
&+& \int_{\Omega} 
[\Phi_{\rm lda}(\rho_{\epsilon_{n}}(t)) -\Phi_{\rm lda}(\rho(t))] \rho({\bf x}, t)
\; d{\bf x}.
\end{eqnarray*}
H\'{o}lder's inequality is applied to each of the terms on the rhs, so
that we need to estimate the following norm products:
$$
\||\Psi_{\epsilon_{n}}(t)|^{\alpha} [|\Psi_{\epsilon_{n}}(t)| - |\Psi(t)|] \|_{L^{r^{\prime}}} \;
\||\Psi_{\epsilon_{n}}(t)| + |\Psi(t)|\|_{L^{r}}, 
$$
$$
\|\;[|\Psi_{\epsilon_{n}}(t)|^{\alpha}- |\Psi(t)|^{\alpha}]|\Psi(t)| \; \|_{L^{r^{\prime}}}
\|\Psi(t)\|_{L^{r}},
$$
where $r = \alpha + 2$ and $r^{\prime}$ is conjugate to $r$.
As has been demonstrated previously, the first product is estimated by
$$
\|\Psi_{\epsilon_{n}}(t)\|_{L^{r}}^{\alpha}\; 
\|\Psi_{\epsilon_{n}}(t) - \Psi(t) \|_{L^{r}} 
(\|\Psi_{\epsilon_{n}}(t)\|_{L^{r}} +  
\|\Psi(t)\|_{L^{r}}),   
$$
which converges to zero as $n \rightarrow \infty$. The second product is
estimated, with the help of (\ref{estPhi}) and H\"{o}lder's inequality, as 
\begin{equation}
\label{alphaminusone}
\alpha \|(|\Psi_{\epsilon_{n}}(t)| + |\Psi(t)|)^{\alpha - 1} (|\Psi_{\epsilon_{n}}(t)| - |\Psi(t)|)
\|_{L^{r/\alpha}} 
\|\Psi(t)\|_{L^{r}}^{2},
\end{equation}
and another application of H\"{o}lder's inequality, with $p=\alpha$ and
$p^{\prime}$ conjugate to $\alpha$, gives the bound, 
$$
\alpha \|(|\Psi_{\epsilon_{n}}(t)| + |\Psi(t)|)\|_{L^{r}}^{\alpha - 1} 
\; \||\Psi_{\epsilon_{n}}(t)| - |\Psi(t)|\|_{L^{r}}
\|\Psi(t)\|_{L^{r}}^{2},
$$ 
so that this term also converges to
zero. Finally, the Coulomb term is directly estimated via the strong
convergence; we omit the details.
It follows that $\Psi \in C(J; H^{1}_{0})$. 

In order to conclude
that $\Psi \in C^{1}(J; H^{-1})$, we subtract two copies of the TDDFT
system, one evaluated at $t$, and the other at $s$, and we estimate for an
arbitrary test function $\zeta$. We need to show that this difference
satisfies a zero limit as $t \rightarrow s$, uniformly in
$\|\zeta\|_{H^{1}_{0}} \leq 1$. 
The property just established, $\Psi \in C(J; H^{1}_{0})$, implies this
for the gradient and external potential terms. The remaining terms can be
estimated via a very useful analogy: replace the $n \rightarrow \infty$
limit in the estimates for Theorem \ref{central1} 
by the $t \rightarrow s$ limit, after constructing parallel algebraic
representations. The
convergence of the corresponding dominating terms holds since 
$\Psi \in C(J; H^{1}_{0})$. 
This completes the proof. 
\end{proof}
\begin{remark}
The combination of Theorem \ref{central1} and Theorem \ref{central2} 
gives Theorem \ref{central} as formulated earlier. This is the 
first central
result of the article.
\end{remark}
\section{Uniqueness}
{\color{blue}{
This section is a replacement for the original section}}.
The following theorem will be established in this section by the techniques
associated with evolution operators. We first state the theorem, and then
establish appropriate background, prior to providing the details of the
proof. We note that the analysis presented here excludes from uniqueness
the case(s) $1 < \alpha < 2$ in the representation of the LDA component of
the potential. 
\begin{theorem}
\label{UTH}
Under the assumptions of this article, there is a unique
weak solution of (\ref{wsol}), where $V_{{\rm e}}$ is defined in
(\ref{redefined}). The defining properties of weak solution are described in 
Definition \ref{weaksolution}. The cases $1 < \alpha < 2$ are excluded.
\end{theorem}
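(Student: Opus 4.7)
The natural strategy is an energy estimate in the $H^{-1}$ norm, using the Green's operator $\mathcal{G}$ of Assumption \ref{Green} to construct an admissible test function. Let $\Psi_{1}, \Psi_{2}$ be two weak solutions of (\ref{wsol}) sharing the initial datum $\Psi_{0}$, and set $\Phi = \Psi_{1} - \Psi_{2}$; by Theorem \ref{central} we have $\Phi \in C(J; H^{1}_{0}) \cap C^{1}(J; H^{-1})$ and $\Phi(0) = 0$. Subtracting the two weak formulations and splitting the contributions into a ``frozen'' linear part evaluated on $\Psi_{1}$ and a quasilinear residual, one obtains, for every $\zeta \in H^{1}_{0}(\Omega)$,
\begin{equation*}
i\hbar \langle \partial_{t}\Phi, \zeta\rangle = \int_{\Omega}\left[\frac{\hbar^{2}}{2m}\nabla \Phi \cdot \nabla \zeta + A\,\Phi\,\zeta + R(\Phi)\,\zeta\right] d{\bf x},
\end{equation*}
where $A = V + W\ast\rho_{1} + \Phi_{\mathrm c} + \lambda|\Psi_{1}|^{\alpha} + \Phi(\cdot,t,\rho_{1})$ is the linear coefficient carried by $\Psi_{1}$, and $R(\Phi) = W\ast(\rho_{1}-\rho_{2})\,\Psi_{2} + \lambda\bigl(|\Psi_{1}|^{\alpha}-|\Psi_{2}|^{\alpha}\bigr)\Psi_{2} + \bigl[\Phi(\cdot,t,\rho_{1})-\Phi(\cdot,t,\rho_{2})\bigr]\Psi_{2}$ is the genuinely quasilinear residual.

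Under Assumption \ref{Green}, standard elliptic regularity extends $\mathcal{G}$ to a bounded operator $L^{2}(\Omega) \to H^{2}\cap H^{1}_{0}$, so that $\mathcal{G}\bar\Phi(t)$ is a valid test function and $\|\Phi\|_{H^{-1}}^{2} = \int_{\Omega}\Phi\,\mathcal{G}\bar\Phi\,d{\bf x} = \|\nabla\mathcal{G}\bar\Phi\|_{L^{2}}^{2}$ via the divergence theorem quoted after Assumption \ref{Green}. Testing the difference equation against $\zeta = \mathcal{G}\bar\Phi$ and using $\frac{d}{dt}\|\Phi\|_{H^{-1}}^{2} = 2\,\mathrm{Re}\!\int_{\Omega}\partial_{t}\Phi \cdot \mathcal{G}\bar\Phi\,d{\bf x}$, the kinetic term collapses, via $-\Delta\mathcal{G}\bar\Phi = \bar\Phi$, to $\frac{\hbar^{2}}{2m}\|\Phi\|_{L^{2}}^{2}$, which is real and so drops out of the imaginary part. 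What remains is
\begin{equation*}
\frac{d}{dt}\|\Phi\|_{H^{-1}}^{2} = \frac{2}{\hbar}\,\mathrm{Im}\!\int_{\Omega}\bigl[A\Phi + R(\Phi)\bigr]\,\mathcal{G}\bar\Phi\,d{\bf x},
\end{equation*}
and it suffices to bound this right-hand side by $C\,\|\Phi\|_{H^{-1}}^{2}$ in order to close the argument by Gronwall's inequality and conclude $\Phi \equiv 0$ from $\Phi(0)=0$.

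Each term is controlled by H\"older, Young, Sobolev, and interpolation against the uniform $C(J; H^{1}_{0})$ bounds of Corollary \ref{Hbound}. For the Hartree residual, the decomposition $\rho_{1}-\rho_{2} = \Psi_{1}\bar\Phi + \bar\Psi_{2}\Phi$, together with Young's inequality on the convolution and the uniform $L^{3}$, $L^{6}$ bounds on the $\Psi_{j}$, supplies the required quadratic dependence. The singular Coulomb factor $\Phi_{\mathrm c}\in L^{p}(\Omega)$, $p<3$, is absorbed by the improved regularity $\mathcal{G}\bar\Phi \in L^{\infty}$ (via $H^{2}\hookrightarrow L^{\infty}$ in $\mathbb{R}^{3}$), so that the singularity is paired against a bounded test-function factor rather than against $\Phi$. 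The LDA residual is the main obstacle: applying the pointwise inequality (\ref{estPhi}) together with the successive H\"older technique (\ref{sucHolder}) already developed in Proposition \ref{3.2}, I estimate $(|\Psi_{1}|^{\alpha}-|\Psi_{2}|^{\alpha})\Psi_{2}$ in $L^{r'}$ with $r = \alpha+2$, and then interpolate the resulting $\|\Phi\|_{L^{r}}$ factor between the uniform $L^{6}$ bound and the $H^{-1}$ norm. The time-history residual is dominated by the Lipschitz hypothesis (\ref{ecfollowsH}). The delicate step throughout will be arranging the H\"older exponents so that every contribution delivers the full $\|\Phi\|_{H^{-1}}^{2}$ (rather than a weaker sub-quadratic power of it), uniformly for $1\le\alpha<4$; this is precisely where the improved regularity of $\mathcal{G}\bar\Phi$ earned from Assumption \ref{Green} becomes decisive, letting me trade integrability on $\Phi$ for smoothness of the test function.
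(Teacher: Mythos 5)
You take a genuinely different route from the paper. Rather than differentiating in time and testing against $\mathcal{G}\bar\Phi$ to run a Gronwall argument for $\|\Phi\|_{H^{-1}}^2$, the paper integrates the difference of the two weak formulations over $[0,t]$, replaces the test function $\zeta$ on the left-hand side by $-\Delta\psi$ with $\psi=\mathcal{G}\zeta$, and takes the supremum over $\|\zeta\|_{H^{1}_{0}}\le 1$. This produces a Gronwall inequality for $\|\Psi(\cdot,t)\|_{H^{1}_{0}}$, with each potential contribution on the right-hand side estimated by H\"older, Young, and Sobolev against $\int_0^t\|\Psi(\cdot,s)\|_{H^{1}_{0}}\,ds$; the crucial point is that both the solutions and the test function $\zeta$ lie in $L^{6}$, exactly the integrability needed to absorb the singular Coulomb factor $\Phi_{\rm c}\in L^{3/2}$ and the LDA power via the $(r,r')$-H\"older scheme with $r=\alpha+2$.

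Your $H^{-1}$ variant has a genuine gap: the right-hand side of your differential inequality cannot be bounded by $C\|\Phi\|_{H^{-1}}^2$. Writing $u=\mathcal{G}\bar\Phi$, one has $\|u\|_{H^{1}_{0}}\sim\|\Phi\|_{H^{-1}}$, but the extra regularity you want to exploit, $\|u\|_{H^{2}}$, scales as $\|\Phi\|_{L^{2}}$, which is merely bounded along the flow and not small. Consider the Coulomb contribution $\mathrm{Im}\int_\Omega\Phi_{\rm c}\,\Phi\,u\,d{\bf x}$: after writing $\Phi=-\Delta\bar u$ and integrating by parts, the term $\int\Phi_{\rm c}|\nabla u|^2$ is real and drops, but the remainder $\mathrm{Im}\int(\nabla\Phi_{\rm c}\cdot\nabla u)\,\bar u\,d{\bf x}$ involves $\nabla\Phi_{\rm c}\sim|{\bf x}-{\bf x}_j|^{-2}\in L^{q}$ only for $q<3/2$, forcing $\nabla u\in L^{p_1}$ with $p_1>3$; one must therefore spend $\|u\|_{H^{2}}\sim\|\Phi\|_{L^{2}}$, and the best achievable bound has the shape $\|\Phi\|_{H^{-1}}^{1-\theta}\|\Phi\|_{L^{2}}^{1+\theta}$ for some $\theta\in[0,1)$, which is strictly sub-quadratic in $\|\Phi\|_{H^{-1}}$. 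The interpolation you propose for the LDA residual likewise returns $\|\Phi\|_{L^{r}}\le C\|\Phi\|_{H^{-1}}^{\theta}$ with $\theta<1$ after absorbing the uniform $L^{6}$ bounds. Sub-quadratic right-hand sides fail the Osgood criterion, so Gronwall does not force $\Phi\equiv 0$. The structural obstruction is that $\Phi_{\rm c}$ and $\Phi_{\rm lda}$ are not locally Lipschitz as maps $H^{-1}\to H^{-1}$: the one-derivative gain from $\mathcal{G}$ is already spent turning the $H^{-1}$ control of $\Phi$ into $H^{1}_{0}$ control of the test function $u$, and cannot simultaneously upgrade the integrability of $\Phi$ itself. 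The paper closes the contraction at the $H^{1}_{0}$ level precisely to avoid this difficulty; you should do the same, or restrict to potentials that are genuinely Lipschitz on $H^{-1}$.
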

Some results allow for $1 < \alpha < 2$. We will be specific when these
cases are excluded.
\subsection{Background}
The evolution operator permits the solution of the linear Cauchy problem,
\begin{eqnarray}
\frac{du}{dt} + A(t) u(t) &=& F(t), \nonumber \\
u(0) = u_{0},
\label{CP}
\end{eqnarray}
on an interval $[0, T]$, with values in a Banach space.
The solution is given by
\begin{equation}
u(t) = U(t,0) u_{0} + \int_{0}^{t} U(t,s) \; F(s) \; ds,
\label{SCP}
\end{equation}
under (strong) assumptions on $u_{0}, F$. In order that (\ref{SCP}) 
hold rigorously, 
the evolution operators $U(t,s)$ are derived for a pair, $(A(t),
X)$ and $(A(t), Y)$, where $Y$ is continuously embedded in $X$ and is a core
subspace of the domains of $A(t)$. The operators are typically generated
on $X$, and shown to be invariant on $Y$ by a commutator relation. 
For this article, $Y = H^{1}_{0}$ and $X = H^{-1}$. This
theory is due to Kato, and is developed in \cite[Chapter 6]{J2}.  
For this article, we may use the results of \cite{J1}, where the desired
properties of the evolution operators 
were derived for Hamiltonian operators $A(t) = {\hat H}(t)$,  
including the kinetic term
plus the external, Hartree, and time-history potentials. The Coulomb and
LDA potentials were not included in that theory. It follows that any
application of these results, intended to derive uniqueness, 
must shift the Coulomb and LDA terms
into the action of $F(t)$. 
The theory asserts \cite[Prop.\ 6.4.1]{J2} that there is a one-to-one
correspondence between the representation (\ref{SCP}) and the unique
solution of 
(\ref{CP}) if $u_{0} \in Y$ and $F \in C(J; X) \cap L^{1}(J;
Y)$. When this hypothesis holds, the unique solution is in $C^{1}(J; X)
\cap C(J; Y)$. However, the characterization of $F$ in the current
situation does not satisfy the required regularity. This accounts for the
following method which we use. Note that we are able to use the linear
theory by defining coefficients of $A(t)$ in terms of the solution itself. 
Further, for $\rho = |\Psi|^{2}$, define
\begin{equation}
F(\Psi) = -[\Phi_{\rm c} + \Phi_{\rm
lda}(\rho)] \Psi, 
\label{defF}
\end{equation}
and $U^{\rho}(t,s)$ to be the evolution operators derived in \cite{J1},
based on a Hamiltonian including 
the kinetic term
plus the external, Hartree, and time-history potentials. As defined, $F$
fails to be in $C(J; H^{1}_{0})$. 
We make use of the following smoothing.
\begin{definition}
\label{Usmooth}
Consider 
the smoothing of section \ref{smoothing}, and define
\begin{equation}
F_{\epsilon}(\Psi) = -[\phi_{\epsilon}\ast\Phi_{\rm c} + 
\phi_{\epsilon}\ast\Phi_{\rm
lda}(\rho)] \Psi, \; 1 \leq \alpha < 4. 
\end{equation}
\end{definition}
This smoothing will be used to prove the following.
\begin{lemma}
\label{lemmaU2}
Suppose that $F(\Psi(s))$ is defined by (\ref{defF}). 
If $\Psi$ satisfies (\ref{wsol}), then  
\begin{equation}
\Psi(t) = U^{\rho}(t,0) \Psi_{0} + 
\int_{0}^{t} U^{\rho}(t,s) \; F(\Psi(s)) \; ds,
\label{SCP1}
\end{equation}
where the integral is interpreted as a member of $C^{1}(J; H^{-1})$,
and $\Psi$ is interpreted as a distribution. 
Conversely, if $\Psi \in C(J; H^{1}_{0}) \cap C^{1}(J; H^{-1})$ 
satisfies (\ref{SCP1}),
then $\Psi$ satisfies (\ref{wsol}).
\end{lemma} 
\begin{proof}
Suppose $\Psi$ is a solution of (\ref{wsol}), with the specified
regularity. 
The function 
$F_{\epsilon}(\Psi)$
is a member of $C(J; H^{1}_{0})$ and 
the replacement in (\ref{SCP1}) of 
$F(\Psi)$ by
$F_{\epsilon}(\Psi)$ yields a solution $\Psi_{\epsilon}$, 
\begin{equation}
\Psi_{\epsilon}(t) 
 = U^{\rho}(t,0) \Psi_{0} + 
\int_{0}^{t} U^{\rho}(t,s) \; F_{\epsilon}(\Psi(s)) \; ds,
\label{SCPapp}
\end{equation}
of the adjusted equation (\ref{wsol}). 
These may be thought of as nearby approximate linear equations, 
indexed by $\epsilon$. We notice the important fact for the argument that 
the functions $\Psi_{\epsilon}(t)$ form a bounded family,
independent of $\epsilon$, in $C(J;
H^{1}_{0})$. 
We use the specific properties that the convolution terms in the
definition of $F_{\epsilon}$ are $L^{\infty}$ functions, with bound
independent of $\epsilon$, and possess $L^{3}$ derivatives, with this norm
independent of $\epsilon$.

In the smoothed case, there is a one to one correspondence between the
representation and the adjusted system. 
As $ \epsilon \rightarrow 0$, the representations 
$\Psi_{\epsilon}$ 
converge in $C(J; H^{-1})$ to a
representation 
\begin{equation}
\Psi_{\ast}(t) = U^{\rho}(t,0) \Psi_{0} + 
\int_{0}^{t} U^{\rho}(t,s) \; F(\Psi(s)) \; ds.
\label{SCP11}
\end{equation}
The convergence follows from \cite[Prop.\
7.1.1]{J2} and the argument presented now. 
Since the evolution operators are independent of $\epsilon$, it suffices 
to estimate the norm of 
$$
\|F(\Psi) - F_{\epsilon}(\Psi)\|_{L^{1}(J; H^{-1})}.
$$
The LDA component is estimated 
in $C(J; H^{-1})$ as follows. For $1 \leq \alpha < 4$, 
we have that $|\Psi|^{\alpha} \in C(J; L^{3/2})$.
It follows that, for $\phi \in H^{1}_{0}$, 
$$
\|[F(\Psi) - F_{\epsilon}(\Psi)] \phi\|_{L^{1}} \rightarrow 0,
$$ 
uniformly in $t$. This combines the generalized H\"{o}lder inequality and 
the properties of the smoothing in $L^{3/2}$. 

The duality estimate for the Coulomb potential is carried out by a similar 
estimate,
via the generalized H\"{o}lder inequality. All that is required is the 
known convergence of the smoothing in $L^{3/2}$. 
We conclude that (\ref{SCP11}) holds.

It remains to equate
$\Psi_{\ast}$ with $\Psi$. 
If we examine the adjusted system (\ref{wsol}), corresponding to
(\ref{SCPapp}), we conclude that the family
$\partial \Psi_{\epsilon}\partial t$ is bounded in $C(J; H^{-1})$. 
This is implied by the boundedness, already noted, of the family 
$ F_{\epsilon}(\Psi)$ in $C(J; H^{1}_{0})$. 
This in turn yields the equicontinuity required for the application of
Proposition B.1 of appendix B.
When this convergence result is applied to a subsequence of  
$\Psi_{\epsilon}$, 
we conclude that 
the difference $\Psi -
\Psi_{\ast}$ solves a linear initial value problem, 
upon cancellation of the terms involving $F$, for which zero is the
unique solution.  

For the converse, we begin with $\Psi$ satisfying (\ref{SCP1}), 
and define $F_{\epsilon}$ as before. 
We again argue that $\Psi$ satisfies 
(\ref{wsol}) using the same limit analysis.
\end{proof}
The following is immediate from the lower semicontinuity of the norm with
respect to weak convergence.
\begin{corollary}
For the weakly convergent sequence $\Psi_{\epsilon_{n}}$ of the proof, we have
$$
\|\Psi\|_{C(J;H^{1}_{0})} \leq \liminf_{n \rightarrow \infty}
\|\Psi_{\epsilon_{n}}\|_{C(J;H^{1}_{0})}.
$$
\end{corollary}

\subsection{The approximation arguments: Proof of Theorem \ref{UTH}}
In order to establish uniqueness, we consider two separate equations,
defined by ${\hat H}_{\rho_{1}}(t), F(\Psi_{1}(t))$, and 
${\hat H}_{\rho_{2}}(t), F(\Psi_{2}(t))$, with solutions $\Psi_{1}, \Psi_{2}$,
resp. The evolution operators are denoted by $U^{\rho_{j}}(t,s), j =
1,2$. The representations satisfy the lemma and are understood to be
in $C(J; H^{-1})$, since the functions $F(\Psi_{j})$ are
in this space. Explicitly,
\begin{equation}
F(\Psi_{j}) = -[\Phi_{\rm c} + \Phi_{\rm
lda}(\rho_{j})] \Psi_{j}, \; j = 1,2. 
\end{equation}
\begin{proposition}
Suppose that $\Psi_{1}$ and $\Psi_{2}$ are two distinct solutions of 
(\ref{SCP1}), where $\rho_{j} = |\Psi_{j}|^{2}, j = 1,2$. Suppose the
respective approximations are given by (\ref{SCPapp}), written here as 
\begin{equation}
\Psi^{\epsilon}_{j}(t) 
 = U^{\rho_{j}}(t,0) \Psi_{0} + 
\int_{0}^{t} U^{\rho_{j}}(t,s) \; F_{\epsilon}(\Psi_{j}(s)) \; ds.
\label{SCPappj}
\end{equation}
For $\alpha = 1$, or $2 \leq \alpha < 4$,
there exists a constant $C$, not depending on $\epsilon$, such that
\begin{equation}
\|\Psi_{1}^{\epsilon}(t) - \Psi_{2}^{\epsilon}(t)\|_{H^{1}_{0})} \leq
C \int_{0}^{t} \|\Psi_{1}(s) - \Psi_{2}(s)\|_{H^{1}_{0}} \; ds,
\label{Gestimate1}
\end{equation}
for all $t \in [0, T]$.
\end{proposition}
\begin{proof}
We begin the argument by writing the operator difference,
$$
\Psi_{1}^{\epsilon}(t) - \Psi_{2}^{\epsilon}(t) = 
[U^{\rho_{1}}(t,0) - U^{\rho_{2}}(t,0)] \Psi_{0} + 
\int_{0}^{t}
[U^{\rho_{1}}(t,s)-U^{\rho_{2}}(t,s)]
\;F_{\epsilon}(\Psi_{1}(s))\;ds \;+
$$
\begin{equation}
\int_{0}^{t} U^{\rho_{2}}(t,s)\;
[F_{\epsilon}(\Psi_{1}(s))-F_{\epsilon}(\Psi_{2}(s))]\;ds.
\label{eqdiff1}
\end{equation}
The estimations of the first and second terms depend
on the representations \cite[(7.1.3)]{J2}, for $g \in H^{1}_{0}$,
\begin{equation}
U^{\rho_{1}}(t,r) g - U^{\rho_{2}}(t,r) g = - \int_{r}^{t} U^{\rho_{1}}
(t,s) [{\hat H}^{\rho_{1}}(s) - {\hat H}^{\rho_{2}}(s)]
U^{\rho_{2}}(s,r)g\;ds. 
\label{eqdiff2}
\end{equation}
Here,
\begin{equation}
[{\hat H}^{\rho_{1}}(s) - {\hat H}^{\rho_{2}}(s)]g = 
 W \ast [\rho_{1} - \rho_{2}]g + 
[\Phi(\Psi_{1}) - \Phi(\Psi_{2})]g
\label{eqdiff3}
\end{equation}
is a member of $C(J; H^{1}_{0})$ if $g \in C(J; H^{1}_{0})$.
We have employed cancellation of the kinetic term and the external
potential term in (\ref{eqdiff3}).
Both terms are readily estimated, uniformly in $s$, in the $H^{1}_{0}$
norm, the first by \cite[Theorem 3.1]{JNA}, for $g = \Psi_{0}$,
and the second by the hypothesis assumed for $\Phi$, for $g =  
F_{\epsilon}(\Psi_{1}(s))$.
After the action of the evolution operators, with respect to bounded sets
in $H^{1}_{0}$, uniformly in $t \in J$, is taken into account, we obtain
an estimate of the form (\ref{Gestimate1}) for these terms in
(\ref{eqdiff1}).

For the estimation of the third term in 
(\ref{eqdiff1}), 
we may write the preliminary algebraic step as
\begin{equation}
\phi_{\epsilon} \ast |\Psi_{1}|^{\alpha}\Psi_{1} -
\phi_{\epsilon} \ast |\Psi_{2}|^{\alpha}\Psi_{2} = 
\phi_{\epsilon} \ast [|\Psi_{1}|^{\alpha} - |\Psi_{2}|^{\alpha}]\Psi_{1} +
\phi_{\epsilon} \ast |\Psi_{2}|^{\alpha}[ \Psi_{1} - \Psi_{2}].
\label{algstep}
\end{equation}
The $H^{1}_{0}$ seminorm requires the estimation of four 
rhs terms for this relation as computed
by the product rule for differentiation.
We provide a summary analysis of each term involved in the
differentiation of (\ref{algstep}). We must show that such terms are
Lipschitz in $L^{2}$, uniformly in $t \in J$. 
The partial derivative, 
with respect to $x_{j}$, of the first term has a pointwise upper bound given by
\begin{equation}
\alpha \phi_{\epsilon} \ast  
||\Psi_{1}|^{\alpha -1}\partial \Psi_{1}/\partial x_{j}
- |\Psi_{2}|^{\alpha -1}\partial \Psi_{2}/\partial x_{j}|\;(|\Psi_{1}|)
+ 
\phi_{\epsilon} \ast ||\Psi_{1}|^{\alpha} - |\Psi_{2}|^{\alpha}|
\; (|\partial \Psi_{1}/\partial x_{j}|). 
\label{first}
\end{equation}
The second term 
of (\ref{first}) is readily estimated, since the convolution factor is 
uniformly in $L^{\infty}$ by Young's convolution inequality. 
In fact, 
$$
\|\phi_{\epsilon} \ast ||\Psi_{1}|^{\alpha} -
|\Psi_{2}|^{\alpha}|\|_{L^{\infty}} \leq
\|\phi_{\epsilon}\|_{L^{r}} \; 
\||\Psi_{1}|^{\alpha} -
|\Psi_{2}|^{\alpha}|\|_{L^{r^{\prime}}},
$$
where $r = \alpha + 2$ and $r^{\prime}$ is conjugate to $r$. 
In the proof of Proposition \ref{2.1}, we showed that 
\begin{equation}
\||\Psi_{1}|^{\alpha} -
|\Psi_{2}|^{\alpha}|\|_{L^{r^{\prime}}} \leq {\rm const} 
\|\Psi_{1} - \Psi_{2} \|_{H^{1}_{0}}
\label{keyub1}
\end{equation}
uniformly in $t$.
The estimation of the first term in (\ref{first}) requires the 
pointwise inequality,
$$
\alpha \phi_{\epsilon} \ast  
(|\;|\Psi_{1}|^{\alpha -1}\partial \Psi_{1}/\partial x_{j}
- |\Psi_{2}|^{\alpha -1}\partial \Psi_{2}/\partial x_{j}\; |)| \Psi_{1}|
\leq
$$
$$
\alpha \phi_{\epsilon} \ast  
(||\Psi_{1}|^{\alpha -1} -
|\Psi_{2}|^{\alpha -1}|
|\partial \Psi_{1}/\partial x_{j}|) \; |\Psi_{1}|
+
$$
\begin{equation}
\alpha \phi_{\epsilon} \ast  
(|\partial \Psi_{1}/\partial x_{j}
- \partial \Psi_{2}/\partial x_{j}| \; 
|\Psi_{2}|^{\alpha -1}|) \:|\Psi_{1}|.
\label{keyub2}
\end{equation}
For both terms on the rhs of (\ref{keyub2}), we demonstrate that the terms
being smoothed are uniformly in $L^{1}$.  
The resulting convolution yields a function in every $L^{p}$
space, with bound
uniformly in $t$. The $L^{2}$ estimation of the product with 
$|\Psi_{1}|$ is then estimated by H\"{o}lder's
inequality. We now present the details. 

The first term on the rhs of (\ref{keyub2}) 
requires a case 
distinction ($\alpha = 1$ is trivial):
$$
1< \alpha < 2; \; 2 \leq \alpha < 4.
$$
For $2 \leq \alpha < 4$, we 
use (\ref{estPhi}) so that, pointwise, we have
$$
||\Psi_{1}|^{\alpha -1} -
|\Psi_{2}|^{\alpha -1}|
|\partial \Psi_{1}/\partial x_{j}| \leq 
(\alpha - 1) \max(|\Psi_{1}|, |\Psi_{2}|)^{\alpha - 2} ||\Psi_{1}| -
|\Psi_{2}|| \; 
|\partial \Psi_{1}/\partial x_{j}|.
$$
The application of H\"{o}lder's inequality, with indices, $1/3, 1/6, 1/2$,
resp.\, together with Sobolev's inequality, gives the desired estimate.

{\it For $1 < \alpha < 2$, this upper bound does not hold in general,
hence this interval is excluded.}

For the estimation of the second term in
(\ref{keyub2}), 
the $L^{1}$
norm of the term being smoothed is estimated by the Cauchy-Schwarz
inequality. Thus, the product of the smoothed term and $|\Psi_{1}|$ can
again be estimated in $L^{2}$ 
by the H\"{o}lder inequality.
The upper bound of the rhs of (\ref{keyub1}) is obtained. 

We now estimate the derivative of the second term in (\ref{algstep}). A
pointwise upper bound for the partial derivative is given by
$$
\alpha \phi_{\epsilon} \ast (|\Psi_{2}|^{\alpha -1}|\partial
\Psi_{2}/\partial x_{j}|)
| \Psi_{1} - \Psi_{2}| +
\phi_{\epsilon} \ast |\Psi_{2}|^{\alpha}
(|\partial \Psi_{1}/\partial x_{j} - \partial  \Psi_{2}/\partial x_{j}|).
$$

The estimation of the first of these two terms follows the previous pattern:
determination of uniform $L^{1}$ bounds for the functions being smoothed,
followed by the $L^{2}$ product estimation via the H\"{o}lder inequality. 
The upper bound of the rhs of (\ref{keyub1}) is directly obtained. 
For the second of these two terms, notice that the convolution factor is
uniformly $L^{\infty}$ by Young's convolution inequality. 
The conclusion is immediate.

A typical $kth$ term for
the partial derivative 
with resp.\ to $x_{j}$ of the Coulomb potential is bounded above pointwise:
$$
|c_{k}|\; \phi_{\epsilon} \ast (|\cdot - x_{k}|^{-2}) |\Psi_{1} - \Psi_{2}|
+ 
|c_{k}| \; \phi_{\epsilon} \ast (|\cdot - x_{k}|^{-1}) 
|\partial \Psi_{1}/ \partial x_{j} - \partial \Psi_{2}/ \partial x_{j}|.
$$ 
H\"{o}lder's inequality implies the bound for the first term. The second
term is directly estimated, since the convolution factor is uniformly
$L^{\infty}$.
This completes the proof of the proposition.
\end{proof}
\begin{corollary}
The uniqueness for the quantum corrected model holds for 
the potentials introduced in this article, except for the exclusion $1 <
\alpha < 2$ in the LDA potential. No further boundary regularity is
required.
\end{corollary}
\begin{proof}
Since the norm is weakly lower semicontinuous, the estimate is transferred
to $\|\Psi_{1} - \Psi_{2}\|$. 
Gronwall's inequality implies the result.
\end{proof}
The uniqueness result permits a useful convergence result for the
smoothing `sequence'.
\begin{corollary}
\label{full}
We assume the conditions of the uniqueness theorem.
Suppose that $\epsilon_{n}$ is any positive sequence of real numbers
convergent to zero. Then the sequence $\Psi_{\epsilon_{n}}$, satisfying
Proposition 
\ref{2.1}, converges in the norm of $C(J; H^{1}_{0}(\Omega)) \cap
C^{1}(J; H^{-1}(\Omega))$ to the unique solution $\Psi$ defined in Theorem
\ref{central}.
\end{corollary}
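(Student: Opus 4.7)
The plan is a standard subsequence-uniqueness argument: because the limit $\Psi$ in Theorem \ref{central} is the \emph{unique} weak solution, any convergent subsequence of $\{\Psi_{\epsilon_n}\}$ must have $\Psi$ as its limit, and a routine topological lemma then promotes this to convergence of the whole sequence.

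First I would observe that the family $\{\Psi_{\epsilon_n}\}$ constructed via Proposition \ref{2.1} inherits the a priori bounds of section \ref{cofe}: by Corollary \ref{Hbound} and Proposition \ref{3.2} it is uniformly bounded in $C(J;H^1_0(\Omega)) \cap C^1(J;H^{-1}(\Omega))$. Crucially, these bounds depend only on the data of the problem, not on the particular choice of sequence $\epsilon_n \to 0$. Hence the extraction procedure of Lemmas \ref{l3.2} and \ref{l3.3}, followed by the limiting passages carried out in Theorems \ref{central1} and \ref{central2}, applies verbatim to any subsequence of $\{\Psi_{\epsilon_n}\}$: from every subsequence one can extract a further subsequence $\Psi_{\epsilon_{n_{k_j}}}$ that converges in the norm of $C(J;H^1_0) \cap C^1(J;H^{-1})$ to some weak solution of the system \eqref{wsol} with potential \eqref{redefined}.

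Next I would invoke the uniqueness theorem proved in section four: any such limit must equal the unique weak solution $\Psi$ furnished by Theorem \ref{central}. Thus every subsequence of $\{\Psi_{\epsilon_n}\}$ admits a further subsequence converging, in the strong norm of $C(J;H^1_0) \cap C^1(J;H^{-1})$, to the same element $\Psi$. The standard metric-space fact that a sequence in a metric space converges to a limit $\ell$ provided every subsequence has a further subsequence converging to $\ell$ then yields
\begin{equation*}
\Psi_{\epsilon_n} \longrightarrow \Psi \quad \text{in } C(J;H^1_0(\Omega)) \cap C^1(J;H^{-1}(\Omega)),
\end{equation*}
as claimed.

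The only step that might look subtle is ensuring that the convergence supplied by Theorems \ref{central1}--\ref{central2} is genuinely in the stated strong norm and not merely weak. This, however, is already the content of Theorem \ref{central2}: the uniform convergence of the kinetic energy density (established via the energy identity in Lemma \ref{lemma3.1} and the strong $L^r$ convergence from Lemma \ref{l3.3}) combined with weak $H^1_0$ convergence upgrades to norm convergence in $C(J;H^1_0)$, and the $C^1(J;H^{-1})$ convergence follows by subtracting the equations at times $t$ and $s$ and invoking the same passage-to-the-limit machinery applied pointwise in $n$ (or, equivalently, via Proposition \ref{B1} part (2)). Consequently no new estimation is required; the corollary is essentially a bookkeeping consequence of existence plus uniqueness.
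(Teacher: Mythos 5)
Your proof is correct and follows essentially the same route as the paper: the paper likewise invokes the subsequence--uniqueness principle, pointing to the extraction and strong-convergence machinery of Theorems \ref{central1}--\ref{central2} for the subsequential limits and relying on the Section~4 uniqueness theorem to identify every such limit with $\Psi$. One small inaccuracy is your parenthetical attribution of the $C^1(J;H^{-1})$ convergence to Proposition~\ref{B1}(2), which concerns only $C(J;H^1_0)$; the paper instead points to the passage-to-the-limit estimates yielding \eqref{deducefrom}, read as holding uniformly in $t$ and in $\|\zeta\|_{H^1_0}\leq 1$.
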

\begin{proof}
We use the elementary fact that, if every subsequence has a further
subsequence converging to a unique limit, then the entire sequence
converges to that unique limit. 
The first part of the proof of Theorem 
\ref{central2} demonstrates subsequential
convergence in 
$C(J; H^{1}_{0}(\Omega))$. The arguments leading to (\ref{deducefrom}) 
demonstrate convergence in
$C^{1}(J; H^{-1}(\Omega))$.

\end{proof}
\section{Summary Remarks}
We have formulated a model within the framework of time dependent density
functional theory. It is a closed system model, posed on a
bounded domain in ${\mathbb R}^{3}$ with homogeneous boundary
conditions.
The novelty of the article lies
in the flexibility of the choice of potentials. In addition to the Hartree
potential and a given external potential, 
we permit Coulomb potentials with fixed ionic point masses, a
time-history potential, and the 
local density approximation (LDA), which is
typically used in simulation. 
We have obtained existence and 
uniqueness for this model on a bounded domain in ${\mathbb R}^{3}$ and
a given finite time interval. 
The growth of the LDA term,
in terms of the exponent $\alpha$,  cannot be
modified for the methods of this article to apply. We have selected the
form here, because of its wide usage in the literature. 
Finally, Corollary \ref{full} assumes significance because the smoothed
solutions can be obtained via the evolution operator, and its
approximations (see the cited references).

We note finally, that the case of periodic boundary conditions frequently
occurs in applications. It is a topic of future study.

\appendix
\section{Notation and Norms}
\label{appendixA}
We employ complex Hilbert spaces 
in this article. 
$$
L^{2}(\Omega) = \{f = (f_{1}, \dots, f_{N})^{T}: |f_{j}|^{2} \;
\mbox{is integrable on} \; \Omega \}.
(f,g)_{L^{2}}=\sum_{j=1}^{N}\int_{\Omega}f_{j}(x){\overline {g_{j}(x)}} \; dx.
$$
However, $\int_{\Omega} fg$ is interpreted as 
$$
\sum_{j=1}^{N} \int_{\Omega} f_{j} g_{j} \;dx.
$$
For $f \in L^{2}$, as just defined, if each component $f_{j}$ satisfies
$
f_{j} \in H^{1}_{0}(\Omega; {\mathcal C}), 
$
we write $f \in H^{1}_{0}(\Omega; {\mathcal C}^{N})$, or simply, 
$f \in H^{1}_{0}(\Omega)$.
The inner product in $H^{1}_{0}$ is 
$$
(f,g)_{H^{1}_{0}}=
(f,g)_{L^{2}}+\sum_{j=1}^{N}\int_{\Omega} 
\nabla f_{j}(x) \cdotp {\overline {\nabla g_{j}(x)}} \; dx.
$$
$\int_{\Omega} \nabla f \cdotp \nabla g$ is interpreted as
$$
\sum_{j=1}^{N}\int_{\Omega} 
\nabla f_{j}(x) \cdotp \nabla g_{j}(x) \; dx.
$$
Finally, $H^{-1}$ is defined as the dual of $H^{1}_{0}$, and its
properties are discussed at length in \cite{Adams}. 
The Banach space $C(J; H^{1}_{0})$ is defined in the traditional manner:
$$
C(J; H^{1}_{0}) = \{u:J \mapsto H^{1}_{0}: u(\cdotp) \mbox{is
continuous}\}, \;
\|u\|_{C(J; H^{1}_{0}} = \sup_{t \in J} \|u(t)\|_{H^{1}_{0}}.
$$
\begin{itemize}
\item
Since $\Omega$ is assumed to be a bounded Lipschitz
domain,
the standard Sobolev embedding theorems for
$H^{1}_{0}(\Omega)$ hold, relative to $L^{p}(\Omega)$ \cite{Adams}.
\end{itemize}

\section{Subsequential Convergence for Bounded Families}
In section \ref{Convsub}, we applied two basic compactness results,
taken from
\cite{Caz} and \cite{Simon}. 
Here, we quote the underlying results for the reader's convenience. 
The first is cited from \cite[Proposition 1.3.14(i,iii)]{Caz}.
\begin{proposition}[Cazenave] 
\label{B1}
Let $I$ be a bounded interval
of ${\mathbb R}$, let $m$ be a nonnegative integer, let $\Omega$ be an
open subset of ${\mathbb R}^{N}$, and let $(f_{n})_{n \in {\mathbb N}}$
be a bounded sequence of $L^{\infty}(I; H^{1}_{0}(\Omega)) \cap
W^{1, \infty}(I; H^{-m}(\Omega))$. 

(1) Then there exist 
$(f_{n_{k}})_{k \in {\mathbb N}}$ and 
$f \in 
L^{\infty}(I; H^{1}_{0}(\Omega)) \cap
W^{1, \infty}(I; H^{-m}(\Omega))$ such that
\begin{equation*}
\forall t \in {\bar I}, \;  f_{n_{k}}(t) \rightharpoonup f(t), k
\rightarrow \infty, \; \mbox{in} \; H^{1}_{0}(\Omega). 
\end{equation*}
(2) If 
$(f_{n})_{n \in {\mathbb N}} \subset C({\bar I}; H^{1}_{0}(\Omega))$
and $\|f_{n_{k}}(t)\|_{H^{1}} \rightarrow
\|f(t)\|_{H^{1}}$ uniformly  on $I$, then $f \in C({\bar I};
H^{1}_{0}(\Omega))$ and 
$$
f_{n_{k}} \rightarrow f \; \mbox{in} \; 
 C({\bar I}; H^{1}_{0}(\Omega)).
$$ 
\end{proposition}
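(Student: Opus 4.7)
The plan for Proposition \ref{B1}(1) is a standard weak-compactness plus diagonal extraction argument. First I would invoke the Banach--Alaoglu theorem: since $L^{\infty}(I; H^{1}_{0}(\Omega))$ is the dual of the separable space $L^{1}(I; H^{-1}(\Omega))$, the bounded sequence $(f_{n})$ has a weak-$*$ convergent subsequence with some limit $f \in L^{\infty}(I; H^{1}_{0})$, and the same reasoning applied to the derivatives gives $f_{t} \in L^{\infty}(I; H^{-m})$, so $f \in W^{1,\infty}(I; H^{-m})$. The boundedness in $W^{1,\infty}(I; H^{-m})$ means that each $f_{n} : \bar{I} \to H^{-m}(\Omega)$ is Lipschitz continuous with a uniform Lipschitz constant, so the family $(f_{n})$ is equicontinuous as maps into $H^{-m}$.

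Next I would fix a countable dense subset $\{t_{j}\} \subset \bar{I}$. For each $j$, $(f_{n}(t_{j}))_{n}$ is bounded in $H^{1}_{0}$, so by Banach--Alaoglu in $H^{1}_{0}$ it has a weakly convergent subsequence. A standard diagonal extraction produces one subsequence $(f_{n_{k}})$ such that $f_{n_{k}}(t_{j}) \rightharpoonup g(t_{j})$ in $H^{1}_{0}$ for every $j$. To extend this to all $t \in \bar{I}$, I would use the $H^{-m}$-equicontinuity: given $t \in \bar{I}$ and $\delta > 0$, pick $t_{j}$ with $|t - t_{j}| < \delta$, test against $\varphi \in H^{m}_{0}$ (dense in $H^{-1}$), and combine weak convergence at $t_{j}$ with the uniform $H^{-m}$-Lipschitz bound to show $(f_{n_{k}}(t), \varphi)$ is Cauchy. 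Boundedness of $(f_{n_{k}}(t))$ in $H^{1}_{0}$ then upgrades this to weak $H^{1}_{0}$ convergence to a limit which must agree with $f(t)$ almost everywhere, and after possibly redefining $f$ on a null set we get the pointwise weak convergence on all of $\bar{I}$.

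For part (2), the key observation is that in a Hilbert space, weak convergence together with norm convergence yields strong convergence. Since we already have $f_{n_{k}}(t) \rightharpoonup f(t)$ in $H^{1}_{0}$ for every $t$ and the hypothesis gives $\|f_{n_{k}}(t)\|_{H^{1}} \to \|f(t)\|_{H^{1}}$ uniformly in $t$, I would first conclude pointwise strong convergence $f_{n_{k}}(t) \to f(t)$ in $H^{1}_{0}$. To deduce $f \in C(\bar{I}; H^{1}_{0})$, I would observe that $f$ is weakly continuous (from the pointwise weak convergence and the $H^{-m}$-equicontinuity of part (1)) and that $t \mapsto \|f(t)\|_{H^{1}}$ is continuous as the uniform limit of the continuous functions $\|f_{n_{k}}(\cdot)\|_{H^{1}}$; weak continuity plus norm continuity gives strong continuity in a Hilbert space.

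The main obstacle is upgrading pointwise strong convergence to uniform convergence on $\bar{I}$, and I would handle it by a contradiction/subsequence argument. Suppose $f_{n_{k}} \not\to f$ in $C(\bar{I}; H^{1}_{0})$; then there exist $\delta > 0$ and $t_{k} \in \bar{I}$ with $\|f_{n_{k}}(t_{k}) - f(t_{k})\|_{H^{1}} \geq \delta$. By compactness of $\bar{I}$ we may assume $t_{k} \to t^{*}$. Continuity of $f$ gives $f(t_{k}) \to f(t^{*})$ strongly in $H^{1}_{0}$, while the uniform norm convergence gives $\|f_{n_{k}}(t_{k})\|_{H^{1}} \to \|f(t^{*})\|_{H^{1}}$. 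The $H^{-m}$-equicontinuity of the subsequence combined with $f_{n_{k}}(t^{*}) \rightharpoonup f(t^{*})$ yields $f_{n_{k}}(t_{k}) \rightharpoonup f(t^{*})$ weakly in $H^{1}_{0}$, and weak convergence plus norm convergence gives strong convergence $f_{n_{k}}(t_{k}) \to f(t^{*})$ in $H^{1}_{0}$. Combined with $f(t_{k}) \to f(t^{*})$, this contradicts $\|f_{n_{k}}(t_{k}) - f(t_{k})\|_{H^{1}} \geq \delta$, completing the proof.
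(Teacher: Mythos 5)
The paper does not actually prove Proposition \ref{B1}; it is quoted verbatim from Cazenave \cite[Proposition 1.3.14(i,iii)]{Caz} for the reader's convenience. Your proof is essentially the standard argument that appears in Cazenave's text, so the comparison is really against that. For part (1), the route you take—Banach--Alaoglu in $L^{\infty}(I;H^{1}_{0})$ viewed as the dual of $L^{1}(I;H^{-1})$, Lipschitz equicontinuity in $H^{-m}$ coming from the $W^{1,\infty}$ bound, diagonal extraction over a countable dense subset of $\bar I$, and then extension to all $t$ by pairing with $\varphi\in H^{m}_{0}$ and using density and uniform $H^{1}_{0}$ bounds—is the canonical one. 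The only small redundancy is the initial weak-$*$ extraction in $L^{\infty}$: one can instead define $f(t)$ directly as the everywhere pointwise weak limit and verify membership in $L^{\infty}(I;H^{1}_{0})\cap W^{1,\infty}(I;H^{-m})$ by lower semicontinuity of norms under weak convergence, avoiding the "agree a.e.\ and redefine" step; but your version is also correct. For part (2), the steps—pointwise strong convergence from weak convergence plus norm convergence in a Hilbert space; strong continuity of $f$ from weak continuity plus continuity of $t\mapsto\|f(t)\|_{H^{1}}$; and uniform convergence via the compactness/contradiction argument that exploits the $H^{-m}$-equicontinuity to pass from $f_{n_k}(t^{*})\rightharpoonup f(t^{*})$ to $f_{n_k}(t_k)\rightharpoonup f(t^{*})$—are all valid. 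The contradiction argument for uniform convergence is a legitimate alternative to expanding $\|f_{n_k}(t)-f(t)\|^{2}$ via the inner product and controlling the cross term; both work, and yours is perhaps cleaner to state. In short, the proof is correct and takes the same essential approach as the cited source.
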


The next result is cited from \cite[Theorem 2.3.14]{Simon}. It is a
generalized Arzela-Ascoli theorem.
\begin{proposition}[Simon]
\label{B2}
Let $X$ be a separable metric space and $Y$ a complete metric space, with
$C \subset Y$ compact. Let ${\mathcal F}$ be a family of uniformly
equicontinuous functions from $X$ to $Y$ with Range$(f) \subset C$
for every $f \in {\mathcal F}$. Then any sequence in ${\mathcal F}$ has a
subsequence converging at each $x \in X$. If $X$ is compact, then 
${\mathcal F}$ is precompact in the uniform topology.
\end{proposition}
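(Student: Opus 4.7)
The plan is to prove this generalized Arzelà–Ascoli theorem by the classical two-step strategy: first extract a subsequence converging pointwise on a countable dense set via a Cantor diagonal argument, then use uniform equicontinuity together with completeness of $Y$ to upgrade to convergence at every $x \in X$; and finally, under the additional compactness hypothesis on $X$, promote this pointwise convergence to uniform convergence.

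For part (1), the starting observation is that separability of $X$ provides a countable dense subset $\{x_k\}_{k \in \mathbb{N}} \subset X$. Fix a sequence $(f_n) \subset \mathcal{F}$. Since $\mathrm{Range}(f_n) \subset C$ and $C$ is compact in $Y$, the sequence $(f_n(x_1))_n$ has a subsequence converging in $C$; call this subsequence $(f_n^{(1)})$. From $(f_n^{(1)})$ extract a further subsequence $(f_n^{(2)})$ for which $(f_n^{(2)}(x_2))$ converges, and iterate. The diagonal sequence $g_n := f_n^{(n)}$ then converges at every $x_k$. To pass to an arbitrary $x \in X$, fix $\epsilon > 0$; uniform equicontinuity of $\mathcal{F}$ yields $\delta > 0$ such that $d_X(x,x') < \delta$ implies $d_Y(f(x), f(x')) < \epsilon/3$ for every $f \in \mathcal{F}$. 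Choose $x_k$ with $d_X(x,x_k) < \delta$, and use the already-established convergence of $(g_n(x_k))$ to bound $d_Y(g_n(x_k), g_m(x_k)) < \epsilon/3$ for large $n,m$. The triangle inequality gives $d_Y(g_n(x), g_m(x)) < \epsilon$, so $(g_n(x))$ is Cauchy; by completeness of $Y$ it converges. Define $f(x)$ as the limit, and note that $f(x) \in C$ since $C$ is closed.

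For part (2), assume $X$ is compact, and let $(g_n)$ be the subsequence produced in part (1), with pointwise limit $f$. Given $\epsilon > 0$, pick $\delta > 0$ from uniform equicontinuity as above; compactness of $X$ furnishes a finite $\delta$-net $x_1, \dots, x_N$. Convergence of $(g_n(x_i))$ for each $i$ gives some $N_0$ with $d_Y(g_n(x_i), g_m(x_i)) < \epsilon/3$ for all $n,m \geq N_0$ and all $i \leq N$. For any $x \in X$, pick $x_i$ within $\delta$ of $x$ and apply the triangle inequality exactly as in part (1) to obtain $d_Y(g_n(x), g_m(x)) < \epsilon$ uniformly in $x$. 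Thus $(g_n)$ is uniformly Cauchy; by completeness of $Y$ it converges uniformly to $f$, and the uniform limit of equicontinuous functions with values in $C$ is a continuous function into $C$. This establishes precompactness of $\mathcal{F}$ in the uniform topology.

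The only genuinely delicate step is the verification that the diagonal subsequence from part (1) is Cauchy at an arbitrary point $x$, because one must simultaneously handle three error contributions (equicontinuity at $x$, Cauchy behavior at a nearby dense-net point, and equicontinuity back to $x$) and ensure the threshold for ``large $n,m$'' is chosen uniformly, which is precisely what \emph{uniform} equicontinuity supplies. Once that triangle-inequality estimate is in place, the same template recycles with only cosmetic changes to produce the uniform estimate in part (2), replacing the countable dense set by a finite $\delta$-net enabled by compactness of $X$.
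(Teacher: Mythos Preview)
Your proof is correct and is precisely the classical diagonal-extraction argument for the generalized Arzel\`a--Ascoli theorem. Note, however, that the paper does not supply its own proof of this proposition: it is merely cited from Simon's textbook (reference~\cite{Simon}) as background for the compactness arguments in Section~\ref{Convsub}. So there is no ``paper's proof'' to compare against; your write-up would serve as a self-contained justification where the paper simply appeals to the literature.
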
 

\medskip

{\it Acknowledgement}: The author thanks Dr.\ Gabriele Ciaramella for
extremely helpful comments regarding the manuscript, leading to improved
accuracy and exposition.

\bigskip


\begin{thebibliography}{100}
\bibitem{RG}  E.~Runge and E.K.U.~Gross,
Density functional theory for time dependent systems. 
Phys. Rev. Lett. {\bf 52} (1984), 997--1000.

\bibitem{U} C.A.~Ullrich, 
{\em Time-Dependent Density-Functional Theory: Concepts and Applications}.
Oxford University Press, 2012.

\bibitem{CH}
T.~Cazenave and A.~Haraux, {\it An Introduction to Semilinear Evolution
Equations}. Revised Ed., Oxford Science Publications, 1998. 

\bibitem{Caz} T.~Cazenave, {\em Semilinear Schr\"{o}dinger Equations}. 
Courant Institute Lec. Notes 10, 2003, Published by the American
Mathematical Society.

\bibitem{MR}
A.~Maspero and D.~Robert, On time dependent Schr\"{0}dinger equations:
global well-posedness and growth of Sobolev norms. J.\ Funct.\ Anal.\ {\bf
273} (2017), no.\ 2, 721-781.

\bibitem{JP} 
J.W.~Jerome and E.~Polizzi, Discretization of time dependent quantum 
systems: Real time
propagation of the evolution operators. Appl. Anal. {\bf 93} (2014), 
2574--2597.

\bibitem{J1} J.W.~Jerome, Time-dependent closed quantum systems: Nonlinear
Kohn-Sham potential operators and weak solutions. J. Math. Anal. Appl. {\bf 429}
(2015), 995--1006.  

\bibitem{tddft} A.~Castro and M.A.L.~Marques, 
Propagators for the time-dependent Kohn-Sham equations. In,
{\it Time Dependent Density Functional Theory}, Lec.
Notes in Phys. {\bf 706}, Springer, (2006), 197-210.

\bibitem{CLB}
E.~Canc\`{e}s and C.L.~Le Bris, On the time-dependent Hartree-Fock
equations coupled with a classical nuclear dynamics. Math.\ Models Methods
Appl.\ Sci.\ {\bf 9} (1999), 963-990.

\bibitem{SCB}
M.~Sprengel, G.~Ciaramella, and A.~Borzi, A theoretical investigation of
time-dependent Kohn-Sham equations. SIAM J.\  Math.\ Anal.\ {\bf 49}
(2017), 1681-1704. 

\bibitem{JF}
J.W.~Jerome, The quantum Faedo-Galerkin equation: Evolution operator and
time discretization, Numer. Funct. Anal. Optim. {\bf 38} (2017), no. 5,
590-601. 
 
\bibitem{CP} Z.~Chen and E.~Polizzi, Spectral-based propagation schemes  
for time-dependent quantum systems with applications to carbon nanotubes.
Phys. Rev. B {\bf 82} (2010), 205410 (8 pages).

\bibitem{Leoni}
G.\ Leoni, {\it A First Course in Sobolev Spaces}. Graduate Studies in
Math.\ 105, Amer.\ Math.\ Soc., Providence, 2009.

\bibitem{MarGross}
M.A.L.~Marques and E.K.U.~Gross,
Time-dependent density functional theory. Annu.\ Rev.\ Phys.\ Chem.\ {\bf
55} (2004), 427-455. 

\bibitem{SaddTeter}
M.~Sadd and M.P.~Teter, A weighted spin density approximation for
chemistry: importance of shell partitioning. J.\ Mol.\ Structure: THEOCHEM
{\bf 501-502} (2000), 147-152. 

\bibitem{DL}
R.L.~Davidchak and B.B. Laird, Weighted-density approximation for general
nonuniform fluid mixtures. Phys.\ Rev.\ E {\bf 60} (1999), 3417-3420.
 
\bibitem{J2}  J.W.~Jerome, {\em Approximation of Nonlinear Evolution
Equations}. Academic Press, 1983. 

\bibitem{LiebLoss}
E.~Lieb and M.~Loss, Analysis, 2nd. ed. Graduate Studies in Math.\ 14, 
Amer.\ Math.\ Soc., Providence, 2001. 

\bibitem{LS}
E.B.~Leach and M.~Sholander, Extended mean values.  Amer.\ Math.\ Monthly 
{\bf 85} (1978), 84-90.

\bibitem{JNA}
J.W.~Jerome, Convergent iteration in Sobolev space for time dependent
closed quantum systems. Nonlinear Anal.\ Real World Appl.\ {\bf 40}
(2018), 130-147.

\bibitem{Simon}
Barry Simon, {\it Real analysis}. Part 1. Amer.\ Math.\ Soc.\ ,
Providence, RI, 2015.

\bibitem{GT}
D.`Gilbarg and N.S.~Trudinger, {\it Elliptic Partial Differential
Equations of Second Order}. Grundlagen der Mathematischen Wissenschaften
{\bf 224}. Springer-Verlag, Berlin-New York, 1977.

\bibitem{Adams}
R.~Adams and J.~Fournier, {\it Sobolev Spaces}.
2nd. ed., Elsevier/Academic Press, 2003.

\end{thebibliography}
\end{document}